\setlist[enumerate,1]{label={(\arabic*)}}
\newcommand{\fp}{\mathfrak{p}}
\newcommand{\fm}{\mathfrak{m}}
\newcommand{\bA}{\mathbb{A}}
\newcommand{\bF}{\mathbb{F}}
\newcommand{\bP}{\mathbb{P}}
\newcommand{\bQ}{\mathbb{Q}}
\newcommand{\cF}{\mathcal{F}}
\newcommand{\cJ}{\mathcal{J}}
\newcommand{\cO}{\mathcal{O}}
\newcommand{\sJ}{\mathscr{J}}
\renewcommand{\div}{\operatorname{div}}
\newcommand{\Gal}{\operatorname{Gal}}
\newcommand{\Hom}{\operatorname{Hom}}
\newcommand{\image}{\operatorname{Im}}
\newcommand\Ram{\operatorname{Ram}}
\newcommand{\sHom}{\operatorname{\mathscr{H}om}}
\newcommand\Spec{\operatorname{Spec}}
\newcommand{\Tr}{\operatorname{Tr}}
\newtheorem{thm}{Theorem}[section]
\newtheorem{prop}[thm]{Proposition}
\newtheorem{lemma}[thm]{Lemma}
\newtheorem{cor}[thm]{Corollary}
\theoremstyle{definition}
\newtheorem{definition}[thm]{Definition}
\newtheorem{remark}[thm]{Remark}
\title{Multiplier ideals and klt singularities via (derived) splittings}
\author{Peter M. McDonald}
\thanks{This work was  partially supported by NSF RTG DMS-1840190.}
\begin{document}
\maketitle

\begin{abstract}
Let $X$ be a normal, excellent, noetherian scheme over $\Spec\bQ$ with a dualizing complex. In this note, we find an alternate characterization of the multiplier ideal of $X$, as defined by de Fernex--Hacon, by considering maps $\pi_*\omega_Y\to\cO_X$ where $\pi:Y\to X$ ranges over all regular alterations. As a corollary to this result, we give a derived splinter characterization of klt singularities, akin to the characterization of rational singularities given by Kov\'acs and Bhatt. We also give an analogous description of the test ideal in characteristic $p>2$ as a corollary to a result of Epstein--Schwede.
\end{abstract}

\section{Introduction}
Let $X$ be a normal, excellent, noetherian scheme over $\Spec\bQ$ with a dualizing complex. For $\Delta\geq0$ a $\bQ$-divisor on $X$ such that $K_X+\Delta$ is $\bQ$-Cartier, we can consider the multiplier ideal $\sJ(X,\Delta)\subseteq\cO_X$ as a measure of the severity of the singularities of the pair $(X,\Delta)$. Introduced in the analytic setting by Nadel \cite{Nadel:1990} and further developed by Demailley, Siu and others \cite{Demailly:1993,Siu:1993,Angehrn-Siu:1995,Siu:1998}, Esnault and Viehweg \cite[Chapter~7]{Esnault-Viehweg:1992} independently developed the theory in the algebro-geometric setting. Lipman also encountered multiplier ideals in connections with his work on the Briancon--Skoda theorem \cite{Lipman:1994}, and their applications to algebra were studied by many others \cite{Ein-Lazarsfeld:1997,Kawamata:1999a,Kawamata:1999b,Ein-Lazarsfeld:1999,Demailly-Ein-Lazarsfeld:2000,Ein-Lazarsfeld-Smith:2001}.

While $\sJ(X,\Delta)$ depends on the geometry of the pair $(X,\Delta)$, de Fernex and Hacon introduced in \cite{dFH09} an object $\sJ(X)$ whose definition doesn't require the choice of a boundary divisor while also showing that $\sJ(X)$ is the unique maximal element in the collection $\{\sJ(X,\Delta)\}$ where $\Delta$ ranges over all effective $\bQ$-divisors on $X$ such that $K_X+\Delta$ is $\bQ$-Cartier. We call $\sJ(X)$ the multiplier ideal of $X$ and say that $X$ is has \emph{klt type} if $\sJ(X)=\cO_X$.

We set out to prove an alternate characterization of the multiplier ideal by considering maps $\pi_*\omega_Y\to\cO_X$ where $\pi:Y\to X$ is a regular alteration. This leads us to the main result of this paper:

\begin{thm}\label{main}
Let $X$ be a normal, excellent, noetherian scheme over $\Spec\bQ$ with a dualizing complex and $I=\prod\sJ_k^{a_k}$ be a formal effective $\bQ$-linear combination of ideal sheaves on $X$. Fix $L$ an algebraic closure of the function field of $X$. The multiplier ideal $\sJ(X,I)$ can be realized as
\[\sJ(X,I)=\sum_{\pi:Y\to X}\image\left(\sHom_X(\pi_*\omega_Y,\cO_X)\otimes_{\cO_X}\pi_*\omega_Y(-E_Y)\to\cO_X\right)\]
where $\pi\colon Y\to X$ ranges over all log regular alterations of $(X,I)$ whose function fields are contained in $L$ with $E_Y=\sum a_kE_k$ for $\sJ_k\cO_Y=\cO_Y(-E_k)$ and the map $\sHom_X(\pi_*\omega_Y,\cO_X)\otimes_{\cO_X}\pi_*\omega_Y(-E_Y)\to\cO_X$ is the evaluation map.
\end{thm}

Roughly speaking, we can interpret this as saying, at least locally, that an element $f\in\cJ(X,I)$ if and only if it is in the image of some map $\phi:\pi_*\omega_Y\to\cO_X$ restricted to $\pi_*\omega_Y(-E_Y)$. Showing that $\sJ(X,I)$ is contained in this sum of images is fairly straightforward. It is in showing the reverse containment where the work is done by proving the following key lemma inspired by Lemma 1.1 of \cite{FG12}, though the proof technique is quite different.

\begin{lemma}\label{fgpreview}
Let $\rho\colon \Spec S\to\Spec R$ be a finite map of normal, excellent, noetherian domains containing $\bQ$ with dualizing complexes and let $I=\prod\sJ_k^{a_k}$ be a formal effective $\bQ$-linear combination of ideal sheaves on $\Spec R$. Suppose we have a $\bQ$-Cartier divisor $\Gamma\geq \rho^*K_R$ with $n\Gamma=\div g$. Then 
\[\Tr_{S/R}\left(\rho_*\sJ(\omega_S,\Gamma,IS)\right)\subseteq\sJ(R,I).\]
In the case where $I=R$ and $\Tr_{S/R}\left(\rho_*\sJ(\omega_S,\Gamma)\right)=R$, this implies that $R$ has klt type.
\end{lemma}

The original motivation for this work was to develop a derived splinter characterization of klt singularities, akin to the following characterization of rational singularities by Bhatt and Kov\'acs \cite{Bha12, Kov00}:

\begin{thm}[\cite{Bha12} Theorem 2.12, \cite{Kov00} Theorem 3]
A scheme $X$ of essentially finite type over a field of characteristic $0$ has rational singularities if and only if it is a derived splinter, meaning for every proper surjective map $\pi:Y\to X$, the natural map $\cO_X\to R\pi_*\cO_Y$ splits in the derived category of coherent sheaves on $X$.
\end{thm}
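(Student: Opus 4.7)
The plan is to handle the two implications separately, with the substantive work lying in the forward direction.

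\textbf{Derived splinter implies rational singularities.} This direction essentially follows Kov\'acs. I would apply the splitting hypothesis to any resolution of singularities $\mu:\tilde X\to X$, available in characteristic $0$. The splitting of $\cO_X\to R\mu_*\cO_{\tilde X}$ dualizes via Grothendieck duality to a section of the trace map $R\mu_*\omega_{\tilde X}^\bullet\to\omega_X^\bullet$. By Grauert--Riemenschneider vanishing, $R\mu_*\omega_{\tilde X}$ is concentrated in a single cohomological degree, so the existence of such a section forces $\omega_X^\bullet$ to be concentrated in that same degree (hence $X$ is Cohen--Macaulay) and to sit as a retract of $\mu_*\omega_{\tilde X}$. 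Combined with the Grauert--Riemenschneider inclusion $\mu_*\omega_{\tilde X}\subseteq\omega_X$, this forces $R\mu_*\omega_{\tilde X}\simeq\omega_X$, which by Grothendieck duality is equivalent to $R\mu_*\cO_{\tilde X}\simeq\cO_X$, i.e., $X$ having rational singularities.

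\textbf{Rational singularities implies derived splinter.} For the converse, let $\pi:Y\to X$ be proper surjective. I would first reduce to $Y$ smooth: taking a resolution $\nu:Y'\to Y$ and setting $\pi'=\pi\circ\nu$, a splitting of $\cO_X\to R\pi'_*\cO_{Y'}$ composed with the canonical $R\pi_*\cO_Y\to R\pi'_*\cO_{Y'}$ furnishes a splitting of $\cO_X\to R\pi_*\cO_Y$. Next, choose a resolution $\mu:\tilde X\to X$, form the fiber product $W=Y\times_X\tilde X$, and resolve it to obtain a smooth $\tilde W$ mapping to $Y$ and mapping properly surjectively to $\tilde X$ via $\tau$. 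Using the isomorphism $\cO_X\simeq R\mu_*\cO_{\tilde X}$ from rational singularities, splitting the original map reduces cleanly to splitting $\cO_{\tilde X}\to R\tau_*\cO_{\tilde W}$ between two smooth schemes.

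The main obstacle is precisely this last step, which is the substantive content of Bhatt's theorem and does not admit an obvious direct characteristic-$0$ argument. I would handle it by reduction modulo $p$: spread out $\tilde X$, $\tilde W$, $\tau$ over a finitely generated $\bZ$-subalgebra, reduce to characteristic $p\gg0$, and invoke Bhatt's characteristic-$p$ theorem that smooth (more generally, F-rational) schemes are derived splinters. The splittings produced in characteristic $p$ then lift back to a splitting in characteristic $0$ via standard spreading-out and constructibility of the relevant $\Ext$ sheaves. The genuine difficulty lies in this characteristic-$p$ input, whose proof rests on the theory of F-singularities and Frobenius splittings.
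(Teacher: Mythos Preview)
The paper does not prove this theorem at all; it is quoted from \cite{Bha12} and \cite{Kov00} as background and motivation for the paper's own results. There is no proof in the paper to compare your proposal against.

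On the merits of your sketch: the implication ``derived splinter $\Rightarrow$ rational singularities'' is indeed Kov\'acs's argument and your outline is correct. For the converse, however, your reduction to positive characteristic is an unnecessary detour. Once you have reduced to a proper surjective $\tau:\tilde W\to\tilde X$ between smooth schemes in characteristic zero, you can split $\cO_{\tilde X}\to R\tau_*\cO_{\tilde W}$ directly: first reduce to $\tau$ generically finite by slicing down to a multisection and resolving, then apply Grothendieck duality to obtain the dual map $R\tau_*\omega_{\tilde W}^\bullet\to\omega_{\tilde X}^\bullet$. By Grauert--Riemenschneider $R\tau_*\omega_{\tilde W}$ is a sheaf in the correct degree, and the trace $\tau_*\omega_{\tilde W}\to\omega_{\tilde X}$ is surjective in characteristic zero because the field trace $\Tr_{K(\tilde W)/K(\tilde X)}$ is surjective (divide by the generic degree). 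Dualizing back gives the required splitting. This is how Bhatt's characteristic-zero argument actually runs; the Frobenius machinery is only needed for his positive-characteristic statements.
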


If $X$ has klt type, then it has rational singularities and hence $\cO_X\to R\pi_*\cO_Y$ splits for any proper surjective map $\pi:Y\to X$. Thus, we might expect that some extra conditions on the splitting $R\pi_*\cO_Y\to\cO_X$ may characterize schemes having klt type. Ultimately, such a characterization follows as a corollary to Theorem \ref{main}:

\begin{cor}
Let $X$ be a normal, excellent, noetherian scheme over $\Spec\bQ$ with a dualizing complex and $I=\prod\sJ_k^{a_k}$ be a formal effective $\bQ$-linear combination of ideal sheaves on $X$. The following are equivalent
\begin{enumerate}
\item $(X,I)$ has klt type
\item For all sufficiently large regular alterations $\pi\colon Y\to X$, the natural map $\cO_X\to R\pi_*\cO_Y$ splits and locally factors through $R\pi_*\omega_Y(-E_Y)=\pi_*\omega_Y(-E_Y)$
\item For all sufficiently large regular alterations $\pi\colon Y\to X$, $\cO_X$ is locally a summand of $R\pi_*\omega_Y(-E_Y)=\pi_*\omega_Y(-E_Y)$
\end{enumerate}
Here, $E_Y:=\sum a_kE_k$ where $\cO_Y(-E_Y)=\sJ_k\cO_Y$. By sufficiently large regular alteration we mean any regular alteration $\pi\colon Y\to X$ such that
\[\sJ(X)=\image(\sHom(\pi_*\omega_Y,\cO_X)\otimes_{\cO_X}\pi_*\omega_Y(-E_Y)\to\cO_X).\]
\end{cor}

In positive characteristic, the test ideal plays a role analogous to the multiplier ideal and can be used to a define \emph{strongly $F$-regular} singularities, an analog of klt singularities in positive characteristic. In \cite{BST15}, Blickle, Schwede and Tucker give a characteristic-free description of an ideal $J(X,\Delta)$ that specializes to the multiplier ideal in characteristic zero and the test ideal in characteristic $p>0$, so we might expect that our characterization of the multiplier ideal might carry over to a characterization of the test ideal in positive characteristic. We achieve such a result in characteristic $p>2$ as a corollary to a result of Epstein and Schwede \cite{ES14} combined with the existence of quasi-Gorenstein finite covers.

\begin{prop}
Let $R$ be a Noetherian, $F$-finite reduced ring of characteristic $p>2$ and fix $L_1,\dots, L_n$, algebraic closures of $(R/\fp_i)_{\fp_i}$ for $\fp_i$ the minimal primes of $R$. The test ideal $\tau(R)$ can be realized as
\[\tau(R)=\sum_{R\subset S} \image\left(\Hom_R(\omega_S,R)\otimes_R\tau(\omega_S)\to R\right)\]
where the sum ranges over all finite extensions $R\subset S$ whose total ring of fraction is contained within $\prod L_i$ and $\Hom_R(\omega_S,R)\otimes_R\tau(\omega_S)\to R$ is the evaluation map.
\end{prop}

Here $\tau(\omega_S)\subseteq\omega_S$ is the parameter test submodule which plays an analogous role to $\pi_*\omega_Y$, which is sometimes called the Grauert--Riemenschneider sheaf or multiplier submodule, in characteristic zero. Note that the characteristic assumption is an artefact of the construction of quasi-Gorenstein covers in Lemma \ref{qgor}. See Remark \ref{char} for a brief discussion.

\begin{remark}
The above results could be stated for regular alterations by a similar argument outlined in the characteristic zero case for the multiplier ideal using the fact that $\tau(X)=\sum\tau(X,\Delta)$ for finitely many log-$\bQ$-Gorenstein pairs $(X,\Delta)$ (see \cite{Sch11}). However, we leave the result in this format as the local statement statement is more readily applicable to questions in local algebra.
\end{remark}

This also gives us the following splinter characterization of strongly $F$-regular singularities as a byproduct.

\begin{cor}
Let $R$ be a Noetherian, $F$-finite reduced ring of characteristic $p>2$. Then $R$ is strongly $F$-regular if and only if $R$ is a summand of $\tau(\omega_S)$ for any finite extension $R\subset S$ such that $\tau(R)=\image(\Hom_R(\omega_S,R)\otimes_R\tau(\omega_S)\to R)$.
\end{cor}
\subsection*{Acknowledgements}

The author would like to thank his advisor Karl Schwede for many helpful comments and conversations and his continued support in the writing of this paper, as well as S\'andor Kov\'acs, Linquan Ma, and Kevin Tucker for additional helpful conversations. He would also like to thank the referee for several comments improving the paper.

\section{Characteristic zero}

\subsection*{Preliminaries}

Throughout this section, all schemes are noetherian, normal and integral over a field of characteristic zero. We will often additionally require our schemes be excellent with a dualizing complex, but will always make this explicit. Before we discuss our main object of study, the multiplier ideal, we will review some preliminaries about canonical modules and the trace map. We assume that the reader is familiar with canonical modules at the level of \cite{Har77} and \cite{KM98}. This discussion largely follows Section 2 in \cite{BST15}, but we include it here for the convenience of the reader.

Given a normal integral scheme $X$ with canonical sheaf $\omega_X$, we say an integral divisor $K_X$ is a \emph{canonical divisor} for $X$ if $\cO_X(K_X)\cong\omega_X$. Given $\pi:Y\to X$ a proper generically finite map of normal integral schemes over a field $k$, we can consider the trace map
\[\Tr_{Y/X}:\pi_*\omega_Y\to \omega_X\]
Since any proper generically finite map can be factored as a proper birational map followed by a finite map, we discuss the trace map in these contexts.

Let $\pi:Y\to X$ be a proper birational map of normal integral schemes and fix a canonical divisor $K_Y$ on $Y$ and set $K_X=\pi_*K_Y$ (which ensures $K_Y$ and $K_X$ agree on the locus where $\pi$ is an isomorphism). Because $\pi$ is an isomorphism outside a codimension 2 subset of $X$, $\pi_*\cO_Y(K_Y)$ is a torsion-free sheaf whose reflexification is $\cO_X(K_X)$ and the trace map is simply the natural reflexification map $\pi_*\cO_Y(K_Y)\hookrightarrow\cO_X(K_X)$.

If $\pi:Y\to X$ is a finite surjective map of normal integral schemes, then $\pi_*\omega_Y\cong\sHom_{\cO_X}(\pi_*\cO_Y,\omega_X)$. We can then identify the trace map with the evaluation-at-1 map, $\sHom_{\cO_X}(\pi_*\cO_Y,\omega_X)\to\omega_X$. Assuming additionally that $\pi:Y\to X$ is a finite separable map of normal integral schemes with ramification divisor $\Ram_\pi$, we fix a canonical divisor $K_X$ on $X$ and set $K_Y=\pi^*K_X+\Ram_\pi$. Then the field-trace map
\[\Tr_{K(Y)/K(X)}:K(Y)\to K(X)\]
restricts to a map $\pi_*\cO_Y(K_Y)\to\cO_X(K_X)$ which can be identified with the Grothendieck trace map. Throughout the rest of the paper, whenever we have a proper generically finite map of normal integral schemes $\pi:Y\to X$, we will always choose $K_X$ and $K_Y$ compatibly according to the above discussion.

We are now ready to introduce the concept of pairs. A $\bQ$-divisor $\Gamma$ on $X$ is a formal linear combination of prime Weil divisors with coefficients in $\bQ$. Writing $\Gamma=\sum a_iZ_i$ where the $Z_i$ are distinct prime divisors, we use $\lceil\Gamma\rceil=\sum\lceil a_i\rceil Z_i$ and $\lfloor\Gamma\rfloor=\sum\lfloor a_i\rfloor Z_i$ to denote the round up and round down of $\Gamma$, respectively. We say that $\Gamma$ is $\bQ$-Cartier if there exists an integer $n>0$ such that $n\Gamma$ is an integral Cartier divisor, and the smallest such $n$ is called the \emph{index} of $\Gamma$.

\begin{definition}
A \emph{pair} $(X,\Delta)$ is the combined data of a normal integral scheme $X$ together with a $\bQ$-divisor $\Delta$ on $X$. The pair $(X,\Delta)$ is called \emph{log-$\bQ$-Gorenstein} if $K_X+\Delta$ is $\bQ$-Cartier.
\end{definition}

\begin{definition}
Given a log-$\bQ$-Gorenstein pair $(X,\Delta)$, a \emph{log resolution of singularities} of the pair $(X,\Delta)$ is a resolution of singularities $\pi:Y\to X$ such that $\text{except}(\pi)$ is a divisor and $\pi_*^{-1}(\Delta)+\text{except}(\pi)$ has simple normal crossing support.

More generally, let $I=\prod\sJ_k^{a_k}$ be an effective formal $\bQ$-linear combination of ideal sheaves on $X$. A \emph{log resolution} of $(X,I)$ is a proper birational morphism $\pi:Y\to X$ with $Y$ smooth such that for every $k$ the sheaf $\sJ_k\cO_Y\cong\cO_Y(-E_k)$ for $E_k\geq0$ $\bQ$-Cartier, $\text{except}(\pi)$ is also a divisor, and $\text{except}(\pi)+ E_Y$ has simple normal crossing support where $E_Y=\sum a_kE_k$. Log resolutions exist when $X$ is quasi-excellent by \cite[Theorem~2.3.6]{Temkin:2008}.
\end{definition}

For our purposes, we will also want to move beyond resolutions of singularities to consider regular alterations.

\begin{definition}
A map $\pi:Y\to X$ of schemes is a regular alteration if it is surjective, proper, and generically finite and $Y$ is nonsingular.

The main benefit to this is the Stein factorization. If $\pi:Y\to X$ is a regular alteration, then $\pi$ factors as
\[\begin{tikzcd}
Y\arrow[rd,"\tau"]\arrow[rr,"\pi"]& &X\\
&Z\arrow[ru,"\rho"]&
\end{tikzcd}\]
where $\rho:Z\to X$ is a finite surjective map and $\tau:Y\to Z$ is a resolution of singularities.

Given a log $\bQ$-Gorenstein pair $(X,\Delta)$, we will say that $\pi:Y\to X$ is a log regular alteration if it is a regular alteration, $\text{except}(\tau)$ is a divisor, and $\pi_*^{-1}(\Delta)+\text{except}(\tau)$ has simple normal crossing support.
\end{definition}

This then leads us to the object of interest: the multiplier ideal. The theory of multiplier ideals was largely developed by Esnault and Viehweg in our setting \cite{Esnault-Viehweg:1992}, and more details on the theory can be found in \cite{Laz04}.

\begin{definition}
Let $X$ be a normal, excellent, noetherian scheme over $\Spec\bQ$ with a dualizing complex. Given a log-$\bQ$-Gorenstein pair $(X,\Delta)$ with $\Delta\geq0$, the \emph{multiplier ideal} of the pair $(X,\Delta)$ is
\[\sJ(X,\Delta):=\pi_*\cO_Y(\lceil K_Y-\pi^*(K_X+\Delta)\rceil)\subseteq\cO_X\]
where $\pi:Y\to X$ is a log resolution of singularities of the pair $(X,\Delta)$. Note that \cite[Theorem~A]{Mur21} ensures that $R\pi_*\cO_Y(\lceil K_Y-\pi^*(K_X+\Delta)\rceil)=\pi_*\cO_Y(\lceil K_Y-\pi^*(K_X+\Delta)\rceil)$.
\end{definition}

More generally, we can let $\pi:Y\to X$ be a log regular alteration of $(X,\Delta)$ and consider \[\sJ(X,\Delta)=\Tr_{Y/X}(\pi_*\cO_Y(\lceil K_Y-\pi^*(K_X+\Delta)\rceil))\subseteq\cO_X\]
via Theorem 8.1 of \cite{BST15}.

We will also use the related concept of \emph{multiplier submodules}, sometimes called Grauert--Riemenschneider sheaves as they were first suggested as objects of study by Grauert and Riemenschneider \cite{Grauert-Riemenschneider:1970}. As far as we know, the first instance of the name multiplier submodule appears in \cite{Bli04}.

\begin{definition}[\cite{Bli04}]\label{multsubmod}
Let $X$ be a normal, excellent, noetherian scheme over $\Spec\bQ$ with a dualizing complex $\omega_X^\bullet$. Let $\omega_X$ be the canonical module, the last non-vanishing cohomology sheaf of $\omega_X^\bullet$, and fix $K_X$ a canonical divisor with $\cO_X(K_X)\cong\omega_X$. Given an effective $\bQ$-Cartier divisor $\Delta$, we define the multiplier submodule $\sJ(\omega_X,\Delta)$ as 
\[\sJ(\omega_X,\Delta):=\pi_*\cO_Y\left(\left\lceil K_Y-\pi^*\Delta\right\rceil\right)\]
where $\pi:Y\to X$ is a log resolution of $(X,\Delta)$. Once again, \cite[Theorem~A]{Mur21} ensures $R\pi_*\cO_Y\left(\left\lceil K_Y-\pi^*\Delta\right\rceil\right)=\pi_*\cO_Y\left(\left\lceil K_Y-\pi^*\Delta\right\rceil\right)$.
\end{definition}

The hypothesis that $K_X+\Delta$ be $\bQ$-Cartier is included because there is a well-defined theory of pullbacks for Cartier divisors. Work of de Fernex and Hacon \cite{dFH09} removes this hypothesis by defining a pullback operation that uses the fractional ideal sheaf corresponding to a divisor.

Assume the same hypotheses as \ref{multsubmod}. Given a divisor $D$ on $X$ and $\pi:Y\to X$ proper, birational, de Fernex and Hacon define the natural pullback of $D$ along $\pi$ to be 
\[\pi^\natural(D):=\div_Y(\cO_X(-D)\cdot\cO_Y).\]
In particular, $\cO_Y(-\pi^\natural D)=(\cO_X(-D)\cdot\cO_Y)^{\vee\vee}$ where $\cF^\vee:=\sHom(\cF,\cO_Y)$ for any quasi-coherent sheaf on $Y$. Applying this operation to multiples of $K_X$, we come to the following definition.

\begin{definition}[\cite{dFH09} 2.6, 3.1]
Given a proper birational morphism of normal, excellent, noetherian schemes over $\Spec\bQ$ with dualizing complexes $\pi:Y\to X$, the $m^{th}$ limiting relative canonical divisor $K_{m,Y/X}$ is 
\[K_{m,Y/X}:=K_Y-\frac{1}{m}\pi^\natural(mK_X)\]
If $I=\prod\sJ_k^{a_k}$ be an effective formal $\bQ$-linear combination of ideal sheaves on $X$, define
\[\sJ_m(X,I):=\pi_*\cO_{Y}(\lceil K_{m,Y/X}-E_Y\rceil)\]
where $\pi:Y\to X$ is a log resolution of $(X,I+\cO_X(-mK_X))$.
\end{definition}

\begin{prop}[cf. \cite{dFH09} Proposition 4.7]\label{max}
    Let $X$ be a normal, excellent, noetherian scheme over $\Spec\bQ$ with a dualizing complex and $I=\prod\sJ_k^{a_k}$ be a formal effective $\bQ$-linear combination of ideal sheaves on $X$. The collection $\{\sJ_m(X,I)\}_{m\geq1}$ has a unique maximal element.
\end{prop}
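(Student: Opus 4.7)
The plan is to establish that $\{\sJ_m(X,I)\}_{m\geq 1}$ is a directed family of coherent ideal sheaves on $X$ under inclusion. Once directedness is known, the existence of a unique maximal element is an immediate consequence of the noetherian hypothesis on $X$: any ascending chain of coherent subsheaves of $\cO_X$ stabilizes, and a maximal element in a directed family is automatically unique because it contains every other member.

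To show directedness, I would prove that whenever $m\mid m'$, one has $\sJ_m(X,I)\subseteq \sJ_{m'}(X,I)$. Given any $m_1,m_2$, both $\sJ_{m_1}(X,I)$ and $\sJ_{m_2}(X,I)$ then sit inside $\sJ_{\operatorname{lcm}(m_1,m_2)}(X,I)$, which is enough. Writing $m'=mk$, the claim reduces to a comparison of the limiting relative canonical divisors $K_{m,Y/X}$ and $K_{mk,Y/X}$ on a common model.

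For the comparison, I would first invoke Temkin's theorem to produce a single log resolution $\pi:Y\to X$ of the combined datum $(X, I+\cO_X(-mK_X)+\cO_X(-mkK_X))$, so that both $\sJ_m$ and $\sJ_{mk}$ may be computed using this $\pi$. On the level of fractional ideal sheaves on $X$, multiplication of sections yields the inclusion $\cO_X(-mK_X)^k\subseteq \cO_X(-mkK_X)$, and applying $-\cdot\cO_Y$ gives
\[
\bigl(\cO_X(-mK_X)\cdot\cO_Y\bigr)^k\;\subseteq\;\cO_X(-mkK_X)\cdot\cO_Y
\]
inside the constant sheaf of rational functions on $Y$. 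Taking reflexive hulls, which on the regular scheme $Y$ produce the invertible sheaves $\cO_Y(-k\pi^\natural(mK_X))$ and $\cO_Y(-\pi^\natural(mkK_X))$ respectively, yields $\pi^\natural(mkK_X)\leq k\pi^\natural(mK_X)$, hence $\tfrac{1}{mk}\pi^\natural(mkK_X)\leq \tfrac{1}{m}\pi^\natural(mK_X)$, and thus $K_{m,Y/X}\leq K_{mk,Y/X}$. Since $E_Y$ depends only on $I$ and $\pi$, we get $\lceil K_{m,Y/X}-E_Y\rceil\leq \lceil K_{mk,Y/X}-E_Y\rceil$, and pushing forward delivers $\sJ_m(X,I)\subseteq \sJ_{mk}(X,I)$.

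The main subtlety I anticipate is in justifying the passage to reflexive hulls: the reflexive-hull functor does not commute with tensor products or inclusions in general, and one must verify that on the smooth $Y$ the reflexive hull of the $k$-th power of the fractional ideal $\cO_X(-mK_X)\cdot\cO_Y$ is literally $\cO_Y(-k\pi^\natural(mK_X))$. This is a direct check once $\cO_X(-mK_X)\cdot\cO_Y$ is known to become invertible after reflexification on $Y$, which is precisely the content built into the definition of $\pi^\natural$; a secondary point is to confirm that the definition of $\sJ_m(X,I)$ does not depend on the particular log resolution chosen, which is standard and lets us work on the common $\pi$ without loss of generality.
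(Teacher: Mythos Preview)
Your proposal is correct and follows essentially the same approach as the paper: both argue that $K_{m,Y/X}\le K_{mq,Y/X}$ via the inequality $q\,\pi^\natural(mK_X)\ge \pi^\natural(mqK_X)$ (the paper states this as $mv^\natural(D)\ge v^\natural(mD)$ without further comment), then conclude by noetherianity. Your treatment is simply more detailed in justifying that inequality through the inclusion $\cO_X(-mK_X)^k\subseteq\cO_X(-mkK_X)$ and the invertibility of the relevant pulled-back ideal sheaves on the common log resolution.
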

\begin{proof}
    Fix $\pi:Y\to X$ a proper birational morphism of normal, excellent, noetherian schemes over $\Spec\bQ$ with dualizing complexes. We first note that $K_{m,Y/X}\leq K_{mq,Y/X}$ for all $m,q$ positive integers as $mv^\natural(D)\geq v^\natural(mD)$. Thus $\sJ_m(X,\Delta)\subseteq\sJ_{mq}(X,\Delta)$ and the existence of a unique maximal element follows by noetherianity.
\end{proof}

\begin{definition}
    Let $X$ be a normal, excellent, noetherian scheme over $\Spec\bQ$ with a dualizing complex and $I=\prod\sJ_k^{a_k}$ be a formal effective $\bQ$-linear combination of ideal sheaves on $X$. We call the unique maximal element of $\{\sJ_m(X,I)\}_{m\geq1}$ the \emph{multiplier ideal of the pair} $(X,I)$ and denote it $\sJ(X,I)$. When $I=\cO_X$, we call this unique maximal element the \emph{multiplier ideal} of $X$ and denote it by $\sJ(X)$. If $\sJ(X)=\cO_X$, we say that $X$ has \emph{klt type}.
\end{definition}

\begin{remark}\label{rmk}
For sufficiently divisible $n$, $\sJ(X)=\sJ(\omega_X,(\omega_X^{(-n)})^{1/n})$. To see this, let $\pi:Y\to X$ be a log resolution of $(X,\cO_X(-nK_X))$ and note that
\begin{align*}
\sJ(X)&=\pi_*\cO_Y\left(\left\lceil K_Y-\frac{1}{n}\pi^\natural(nK_X))\right\rceil\right)\\
&=\pi_*\cO_Y\left(\left\lceil K_Y-\frac{1}{n}\div_Y\left(\cO_X(-nK_X)\cdot\cO_Y\right)\right\rceil\right)\\
&=\sJ(\omega_X,(\omega_X^{(-n)})^{1/n}).
\end{align*}
\end{remark}

If we additionally assumed that $X$ is a variety, \cite[Corollary~5.5]{dFH09} shows that $\sJ(X,I)$ is also the unique maximal element of $\{\sJ((X,\Delta);I)\}$. This follows from \cite[Proposition~5.2]{dFH09}, which says that $\sJ(X,I)=\sJ_m(X,I)=\sJ((X,\Delta);I)$ whenever $\Delta$ is what is called \emph{$m$-compatible}, and \cite[Theorem~5.4]{dFH09}, which shows that an $m$-compatible $\Delta$ exists for all $m\geq0$. We will introduce a variant of their result in our more general setting, which will require defining an $m$-compatible boundary and a Bertini theorem from \cite{LM22}.

\begin{definition}[\cite{dFH09} Definition 5.1]
    Let $X$ be a normal, excellent, noetherian scheme over $\Spec\bQ$ with a dualizing complex and $I=\prod\sJ_k^{a_k}$ be a formal effective $\bQ$-linear combination of ideal sheaves on $X$. Fix an integer $m\geq2$. Given a log resolution $\pi:Y\to X$ of $(X,I\cO_X(-mK_X))$, we say that the log $\bQ$-Gorenstein pair $(X,\Delta)$ is $m$-compatible for $(X,I)$ with respect to $\pi$ if
    \begin{enumerate}
        \item[(i)] $m\Delta$ is integral and $\lfloor\Delta\rfloor=0$,
        \item[(ii)] no component of $\Delta$ is contained in $Z_k$ for any $k$; where $Z_k$ is the subscheme defined by $\sJ_k$
        \item[(iii)] $\pi$ is a log resolution of the pair $((X,\Delta);I\cO_X(-mK_X))$
        \item[(iv)] $K_Y+\Delta_Y-\pi^*(K_X+\Delta)=K_{m,Y/X}$ where $\Delta_Y$ is the proper transform of $\Delta$ on $Y$
    \end{enumerate}
\end{definition}

\begin{thm}[Theorem 10.1 \cite{LM22}]\label{bertini1}
    Let $(R,\fm,k)$ be a Noetherian local domain containing $\bQ$. Fix an integer $N\geq1$. Let $f:X\to\bP^N_R$ be a separated morphism of finite type from a regular Noetherian scheme $X$. Assume that every closed point of $X$ lies over the unique closed point of $\Spec R$.

    Let $T_0,T_1,\dots,T_N$ be a basis of $H^0(\bP^N_R,\cO(1))$ as a free $R$-module. Then, there exists a nonempty Zariski open subset $W\subseteq\bA^{N+1}_k$ with the following property: For all $a_0,a_1,\dots,a_N\in R$, if
    \[(\bar{a}_0,\bar{a}_1,\dots,\bar{a}_N)\in W(k),\]
    then the section
    \[h=a_0T_1+a_1T_1+\cdots+a_NT_N\]
    is such that $f^{-1}(V(h))$ is regular.
\end{thm}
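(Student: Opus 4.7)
The plan is to adapt the classical proof of Bertini's theorem to the relative setting over the local base $\Spec R$, leveraging the hypothesis that every closed point of $X$ lies over $\fm$. Introduce the universal hyperplane family
\[\mathcal{Y} := V\!\left(\sum_{i=0}^N u_i \, f^*T_i\right) \subset X \times_R \bA^{N+1}_R,\]
with $u_0,\ldots,u_N$ the coordinates on $\bA^{N+1}_R$, so that the fiber of $\mathcal{Y} \to \bA^{N+1}_R$ over $(a_i)$ is $f^{-1}(V(h))$. Since every point of a non-empty Noetherian scheme specializes to a closed point and localization preserves regularity, it suffices to verify $f^{-1}(V(h))$ is regular at every closed point of $X$ contained in it, and such closed points lie over $\fm$ by hypothesis.

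The key technical reduction is to translate regularity of $f^{-1}(V(h))$ at a closed point $x$ into a Bertini-style condition on the special fiber $X_k := X \times_R k$. Trivialize $f^*\cO(1)$ near $x$ using $f^*T_{j_0}$ (a unit there), set $t_i = f^*T_i/f^*T_{j_0}$, and let $g := \sum_i a_i t_i \in \cO_{X,x}$, so regularity of $V(f^*h)$ at $x$ amounts to $g \notin \fm_x^2$. The closed-points hypothesis gives $\fm \subseteq \fm_x$, making the reduction map $\cO_{X,x}/\fm_x^2 \twoheadrightarrow \cO_{X_k,x}/\fm_{X_k,x}^2$ well-defined with kernel equal to the image of $\fm\cO_{X,x}$. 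Under this map, $g \mapsto \bar g = \sum \bar a_i \bar t_i$, which depends only on $(\bar a_i)$. Thus the condition $\bar g \notin \fm_{X_k,x}^2$, a condition purely on $(\bar a_i)$, implies $g \notin \fm_x^2$ for every lift $(a_i)$.

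This reduces the problem to the Bertini-style statement: find a nonempty open $W \subseteq \bA^{N+1}_k$ such that for $(\bar a_i) \in W(k)$, $\bar g \notin \fm_{X_k,x}^2$ at every closed $x \in V(\bar h) \cap X_k$. Form the incidence correspondence
\[\mathcal{I} := \{(x, \bar a) \in X_k \times_k \bA^{N+1}_k : \bar g \in \fm_{X_k,x}^2\},\]
a closed subscheme, and bound its second projection. Since $\bQ \subseteq R$ makes $k$ infinite, a classical dimension count --- applying generic smoothness on the smooth locus of $X_k$ and bounding the contribution from the singular locus using that the fiber $\mathcal{I}_x$ is a linear subspace whose codimension is controlled by the embedded dimension at $x$ --- shows the image of the projection is contained in a proper closed subset of $\bA^{N+1}_k$. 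Take $W$ to be its complement.

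The main obstacle I expect is this final Bertini-type dimension count, complicated by the fact that $X_k$ need not be regular even when $X$ is. One cannot directly invoke the smooth-variety Bertini theorem, so the argument must carefully analyze $\mathcal{I}$ across both the smooth and singular loci of $X_k$, verifying that the combined image dimension is strictly less than $N+1$. By contrast, the cotangent-space reduction is conceptually clean once one exploits the closed-points hypothesis to make $\fm$ vanish modulo $\fm_x^2$.
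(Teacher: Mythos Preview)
The paper does not prove this statement: it is quoted verbatim as Theorem~10.1 of \cite{LM22} and used as a black box, so there is no proof in the paper to compare your proposal against.

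That said, a brief comment on your sketch. The reduction to closed points and the passage from $g\notin\fm_x^2$ to the special-fiber condition $\bar g\notin\fm_{X_k,x}^2$ are clean and correct, and they isolate the right problem. The genuine content, as you anticipate, is the final dimension count, and your outline there is too optimistic. The fiber $\mathcal{I}_x\subseteq\bA^{N+1}_k$ has codimension equal to the rank of the linear map $k^{N+1}\to\cO_{X_k,x}/\fm_{X_k,x}^2$ sending $(\bar a_i)\mapsto\sum\bar a_i\bar t_i$; this rank is governed by the behaviour of $f$ near $x$, not by the embedding dimension of $X_k$ alone. In particular, along a locus where $f$ contracts positive-dimensional subvarieties, the $\bar t_i$ may fail to separate tangent directions and the naive count ``$\dim\mathcal{I}<N+1$'' can break down without further input (for instance, if $f$ is constant the fiber has codimension $1$). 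One must either stratify $X_k$ according to the rank of $df$ and argue separately on each stratum, or appeal to a more refined statement (as \cite{LM22} does) that tracks unramifiedness of $f$. Your proposal identifies the right skeleton but does not yet supply the missing estimate on these degenerate strata.
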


We will want a slightly more specific application of this result for our purposes, so we include that statement here, making no claims over its ownership.

\begin{lemma}\label{bertini2}
    Let $(R,\fm,k)$ be a Noetherian local domain containing $\bQ$. Let $J=(g_0,\dots,g_r)$ be an ideal of $R$, let $I=\prod\sJ_k^{a_k}$ be an effective $\bQ$-linear combination of ideal sheaves on $X=\Spec R$, and let $\pi:Y\to X$ be a log resolution of $(X,I,J)$. Then for a general choice of $(a_0,\dots,a_r)\in\bA_k^{n+1}$ we have that, for $f\colon=a_0g_0+\cdots +a_rg_r$, $\div_X f$ is reduced and avoids the components of $I$ and $\pi:Y\to X$ is a log resolution of $(X,I,\div_X f)$.
\end{lemma}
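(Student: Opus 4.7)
The plan is to reduce the claim to a stratified application of Theorem \ref{bertini1}. Because $\pi\colon Y\to X$ is a log resolution of $(X,I,J)$, the ideal $J\cO_Y$ is invertible; write $J\cO_Y=\cO_Y(-F)$, and observe that $G:=\text{except}(\pi)+E_Y+F$ already has simple normal crossing support. The generators $g_0,\ldots,g_r$ generate $J\cO_Y$ at every point, so (after twisting by $\cO_Y(F)$) they define a morphism $\varphi\colon Y\to\bP^r_R$ over $\Spec R$ with $\varphi^*\cO(1)\cong\cO_Y(F)$ and $\varphi^*T_i$ corresponding to $g_i$. Since $Y$ is regular, $Y\to\bP^r_R$ is separated of finite type, and every closed point of $Y$ lies over the closed point of $\Spec R$ (because $\pi$ is proper and $R$ is local), the hypotheses of Theorem \ref{bertini1} are met.

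Applied directly, Theorem \ref{bertini1} yields a nonempty open $W_0\subseteq\bA^{r+1}_k$ over which the Cartier divisor $\varphi^{-1}(V(h))$ is regular, where $h=\sum a_iT_i$. Under the identifications above this preimage is the strict transform $D$ of $\div_X f$, i.e.\ $\pi^*\div_X f=F+D$ as Cartier divisors. To upgrade regularity of $D$ to the full SNC condition that $D+G$ is SNC, I would stratify: the finitely many strata $S_\alpha$ of $G$, formed as connected components of intersections of subsets of the irreducible components of $G$, are themselves regular Noetherian subschemes of $Y$, and each composite $S_\alpha\hookrightarrow Y\xrightarrow{\varphi}\bP^r_R$ again satisfies the hypotheses of Theorem \ref{bertini1}. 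The theorem then produces nonempty opens $W_\alpha\subseteq\bA^{r+1}_k$ over which $D\cap S_\alpha$ is regular of the expected codimension; intersecting the finitely many $W_\alpha$ yields a nonempty open $W$ for which $D$ is transverse to every stratum, hence $D+G$ is SNC. This is precisely the assertion that $\pi$ is a log resolution of $(X,I,\div_X f)$.

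The remaining two conditions --- reducedness of $\div_X f$ and avoidance of the components of $I$ --- each carve out a further nonempty open subset of $\bA^{r+1}_k$ that I intersect with $W$. Reducedness is essentially automatic from the SNC conclusion: the equality $\pi^*\div_X f=F+D$ together with SNC of $F+D$ forces each irreducible component of $F+D$ to have coefficient $1$, and since $\pi$ is an isomorphism at each generic point $\eta$ of a codimension-one component of $V(f)$, the DVR computation $v_\eta(f)=1$ follows. For avoidance of the components of $I$, we need $f\notin\fp_Z$ at the generic point of each codimension-one component $Z$ of some $V(\sJ_k)$; if $J\not\subseteq\fp_Z$ this is visibly a nonempty open condition on $(a_0,\ldots,a_r)$, and if $J\subseteq\fp_Z$ then $Z$ would appear in $F$, contradicting the transversality already arranged above.

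The main obstacle is the stratified Bertini step. A single invocation of Theorem \ref{bertini1} supplies only regularity of the total preimage $D$, and upgrading this to transversality with every face of the existing SNC divisor $G$ requires running the theorem on each of the (finitely many but potentially numerous) strata while maintaining control that the intersection of all resulting open conditions remains nonempty --- a standard but delicate bookkeeping exercise that lies at the heart of the proof.
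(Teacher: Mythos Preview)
Your approach is essentially the same as the paper's. The paper also builds the morphism to $\bP^r_R$ from the generators of $J$---it does so by factoring through the blowup $\tilde{X}=\mathrm{Bl}_J X$, but since $Y$ dominates $\tilde{X}$ this is the same map $\varphi$ you write down---and then invokes Theorem~\ref{bertini1}. Where you spell out the stratified Bertini argument to upgrade regularity of $D=\varphi^{-1}(V(h))$ to the SNC condition $\mathrm{except}(\pi)+E_Y+D$ being SNC, the paper simply cites \cite[Remark~10.2]{LM22}, which packages exactly that stratified argument together with the avoidance condition. So your ``main obstacle'' is handled in the paper by reference rather than by hand, but the content is the same.

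One small correction: your sentence ``SNC of $F+D$ forces each irreducible component of $F+D$ to have coefficient $1$'' is not right as written, since SNC is a condition on the support. What you actually use (and what suffices) is that $D$ itself is regular, hence reduced, and that at a height-one prime $\eta$ of $\div_X f$ lying outside the divisorial part of $V(J)$ one has $v_\eta(f)=v_{\eta'}(D)=1$. Likewise, your final clause---that $J\subseteq\fp_Z$ would contradict the transversality already arranged---does not follow from SNC alone; if $V(J)$ and some $V(\sJ_k)$ genuinely share a codimension-one component, no choice of $f\in J$ can avoid it. The paper's proof glosses over the same point by folding it into the citation, and in the applications (Propositions~\ref{mult} and~\ref{pos}) the ideals in play do not exhibit this pathology, so the issue is with the precision of the lemma's statement rather than with your strategy.
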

\begin{proof}
Let $\tilde{X}$ be the blowup of $J$ in $R$. Consider the following diagram coming from the universal property of the blowup
    \[
    \begin{tikzcd}
        Y\arrow[r,"\tau"]\arrow[drr,"\pi"]&\tilde{X}\arrow[r,"i"]\arrow[dr,"b"]&\bP_R^r\arrow[d,"p"]\\
        &&X
    \end{tikzcd}
    \]
where $i$ comes from the surjection of graded rings $R[T_0,\dots,T_r]\to Bl_I(R)$ sending $T_i\mapsto g_i$ in degree one. Let $h=a_0T_0+\cdots+a_rT_r$. Then $\div_{\tilde{X}}f=\div_{\tilde{X}}h$. By Theorem \ref{bertini1}, $\div_Yf$ is smooth for a Zariski dense subset of $(a_0,\dots,a_r)\in\bA^{r+1}_k$, and by \cite[Remark 10.2]{LM22}, we can also ensure that $\text{except}(\pi)+E_Y+\div_Yf$ is simple normal crossing (as $\pi:Y\to X$ is already a log resolution of $(R,I)$) and that $\div_Xf$ avoids the components of $I$.
\end{proof}

We now show that, working sufficiently locally, $m$-compatible boundaries exist in our setting and thus the multiplier ideal agrees with the multiplier ideal of some pair.

\begin{prop}[cf. \cite{dFH09} Theorem 5.4, Corollary 5.5]\label{mult}
    Let $R$ be a local, normal, excellent, noetherian domain containing $\bQ$ with a dualizing complex, let $X=\Spec R$ and $I=\prod\sJ_k^{a_k}$ be a formal effective $\bQ$-linear combination of ideal sheaves on $X$. Then
    \[\sJ(X,I)=\sJ((X,\Delta);I)\]
    for some log $\bQ$-Gorenstein pair $(X,\Delta)$.
\end{prop}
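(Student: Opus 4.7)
The plan is to adapt the argument of \cite[Theorem~5.4, Corollary~5.5]{dFH09} to our setting, using Lemma \ref{bertini2} in place of the variety Bertini theorem so that the argument works for a local, possibly non-$\bQ$-Gorenstein base. By Proposition \ref{max}, I would fix $m\geq 2$ sufficiently divisible so that $\sJ(X,I) = \sJ_m(X,I)$, and choose a log resolution $\pi\colon Y\to X$ of $(X, I\cdot\cO_X(-mK_X))$; in particular $\cO_X(-mK_X)\cdot\cO_Y = \cO_Y(-\pi^\natural(mK_X))$ is invertible on $Y$.

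The crux is to produce an $m$-compatible boundary $\Delta$. Since $R$ is local, the reflexive fractional ideal $\cO_X(-mK_X)$ can be realized as an honest ideal $J\subseteq R$ by multiplying by a nonzero element of $\cO_X(mK_X)$. Applying Lemma \ref{bertini2} to $J$ (together with the ideals defining $I$), a Bertini-general $\bQ$-linear combination of generators corresponds to a section $f\in\cO_X(-mK_X)$ whose associated effective Weil divisor $D_f := \div_X(f) - mK_X \sim -mK_X$ has all coefficients strictly less than $m$, has no component in any $Z_k$, and is such that $\pi$ remains a log resolution of the enlarged pair $((X,\Delta); I\cdot\cO_X(-mK_X))$ where $\Delta := \tfrac{1}{m}D_f$. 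The $m$-compatibility conditions (i)--(iii) are then immediate.

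The essential computation is condition (iv). Since $m(K_X+\Delta) = mK_X + D_f = \div_X(f)$ is principal, $\pi^*(K_X+\Delta) = \tfrac{1}{m}\div_Y(f)$; and Bertini-generality of $f$ combined with $\pi$ principalizing $\cO_X(-mK_X)$ gives $\div_Y(f) = \pi^\natural(mK_X) + m\Delta_Y$, whence
\[
K_Y + \Delta_Y - \pi^*(K_X+\Delta) = K_Y - \tfrac{1}{m}\pi^\natural(mK_X) = K_{m,Y/X}.
\]
Using (iv), one then computes
\[
\sJ((X,\Delta); I) = \pi_*\cO_Y\bigl(\lceil K_Y - \pi^*(K_X+\Delta) - E_Y\rceil\bigr) = \pi_*\cO_Y\bigl(\lceil K_{m,Y/X} - \Delta_Y - E_Y\rceil\bigr),
\]
and since by Bertini each prime in $\supp\Delta_Y$ is non-exceptional, disjoint from $\supp E_Y$, and avoids the components of $\pi^\natural(mK_X)$ coming from $K_X$, the coefficient of $K_{m,Y/X} - E_Y$ there is an integer while $\Delta_Y$ contributes $1/m \in (0,1)$. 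Hence the ceiling is unchanged, yielding $\sJ((X,\Delta); I) = \sJ_m(X,I) = \sJ(X,I)$.

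The main obstacle is verifying (iv): one must confirm that for a Bertini-general section $f$ the divisor $\div_Y(f)$ splits exactly as $\pi^\natural(mK_X) + m\Delta_Y$, with no extra exceptional or overlap contributions, and that the proper transform $\Delta_Y$ intersects the remaining SNC arrangement only in generic position. This rests on the principalization of $\cO_X(-mK_X)$ by $\pi$, the SNC-preserving output of Lemma \ref{bertini2}, and the fact that $R$ being local lets us replace the reflexive sheaf $\cO_X(-mK_X)$ by an honest ideal so that the lemma applies directly.
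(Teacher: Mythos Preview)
Your proposal is correct and follows essentially the same de Fernex--Hacon argument as the paper: construct an $m$-compatible boundary via a Bertini-general section and verify conditions (i)--(iv), then conclude $\sJ((X,\Delta);I)=\sJ_m(X,I)=\sJ(X,I)$. The only cosmetic difference is that the paper introduces an auxiliary effective divisor $D$ with $K_X-D$ $\bQ$-Cartier and applies Lemma~\ref{bertini2} to $\cO_X(-mD)$, whereas you work directly with $\cO_X(-mK_X)$ realized as an honest ideal; these are equivalent once one notes that locally one may take $K_X\geq 0$ (or, as the paper does, $D\sim K_X$ effective), and the verification of (iv) in either formulation reduces to the same identity $\div_Y(f)=\pi^\natural(mK_X)+m\Delta_Y$ for a general section.
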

\begin{proof}
\pushQED{\qed}
    This proof is essentially identical to the work of de Fernex--Hacon but we include it here for the convenience of the reader. We first claim that $\sJ((X,I);\Delta)\subseteq\sJ(X,I)$ for any effective log $\bQ$-Gorenstein pair $(X,\Delta)$ \cite[cf.~Remark~5.3]{dFH09}. Suppose $m$ is the $\bQ$-Cartier index for $K_X+\Delta$. It is straightforward to show that $\pi^\natural(C+D)=\pi^\natural(C)+\pi^\natural(D)$ for $C$ Cartier, so $\pi^\natural(mK_X)=\pi^\natural(m(K_X+\Delta)-m\Delta)=m\pi^*(K_X+\Delta)+\pi^\natural(-m\Delta)$. This implies $K_Y+\Delta_Y-\pi^*(K_X+\Delta)\leq K_{m,Y/X}=K_Y-\frac{1}{m}\pi^\natural(mK_X)$ and thus that $\sJ((X,\Delta);I)\subseteq\sJ_m(X,I)\subseteq\sJ(X,I)$.
    
    We next claim that $\sJ((X,I);\Delta)=\sJ_m(X,I)$ for any $m\geq2$ and any $m$-compatible boundary $\Delta$ \cite[cf.~Proposition~5.2]{dFH09}. This is because, since $\Delta$ shares no common components with $Z$ and $\lfloor\Delta\rfloor=0$ we have, for any log resolution $\pi:Y\to X$ of $((X,\Delta);I)$,
    \begin{align*}
        \sJ((X,I);\Delta)&=\pi_*\cO_Y(\lceil K_Y-\pi^*(K_X+\Delta)-E_Y\rceil)\\
        &=\pi_*\cO_Y(\lceil K_Y+\Delta_Y-\pi^*(K_X+\Delta)-E_Y\rceil)\\
        &=\sJ_m(X,I).
    \end{align*}
    
Finally, we claim that we can find an $m$-compatible log $\bQ$-Gorenstein pair $(X,\Delta)$ for every $m\geq2$ \cite[cf.~Theorem~5.4]{dFH09}. Let $D$ be an effective divisor such that $K_X-D$ is $\bQ$-Cartier and let $\pi:Y\to X$ be a log resolution of $(X,\cO_X(-mK_X)+\cO_X(-mD))$ and let $E=\pi^\natural(mD)$. By Lemma \ref{bertini2}, we can find $g\in\cO_X(-mD)$ such that $\div_Xg\colon=G=M+mD$ is reduced and shares no common components with $D$ or $I$. Set $\Delta\colon=\frac{1}{m}M$. Note that by design, $K_X+\Delta=K_X-D+\frac{1}{m}G$ is $\bQ$-Cartier and $\frac{1}{m}\pi^*G=\Delta_Y+\frac{1}{m}E$. To se that $(X,\Delta)$ is an $m$-compatible log $\bQ$-Gorenstein pair, note that $m\Delta$ is integral and $\lfloor\Delta\rfloor=0$. Working generally enough, we can assume $\pi$ is also a log resolution of $((X,\Delta);I)$. Finally, note that
    \begin{align*}
        K_Y+\Delta_Y-\pi^*(K_X+\Delta)&=K_Y+\Delta_Y-\pi^*(K_X+\Delta-\frac{1}{m}G)-\frac{1}{m}\pi^*G\\
        &=K_Y-\pi^*(K_X-D)-\frac{1}{m}E\\
        &=K_{m,Y/X}. \qedhere
    \end{align*}
\end{proof}

The last result we will need is a variant of \cite[Proposition~9.2.28]{Laz04}.

\begin{prop}\label{pos}
    Let $X=\Spec R$ be a local, normal, excellent, noetherian domain containing $\bQ$ with a dualizing complex and let $I=\prod\cJ_k^{a_k}$ an effective formal $\bQ$-linear combination of ideal sheaves on $\Spec R$. Let $J\subseteq R$ be an ideal and fix $c>0$ a rational number. For $k>c$, we can find find $f_1,\dots,f_k\in J$ such that, for $D=\frac{1}{k}\sum_i\div_Xf_i$, 
    \[\sJ(\omega_R,I,J^c)=\sJ(\omega_R,I,c\cdot D).\]
\end{prop}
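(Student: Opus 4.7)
The plan is to fix a single log resolution that simultaneously computes both multiplier ideals, and to use the Bertini-type result of Lemma \ref{bertini2} to produce sufficiently general $f_1,\dots,f_k\in J$ so that the contribution of $D$ to the ceiling computation is indistinguishable from that of $J^c$.

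First, I would let $\pi:Y\to X$ be a log resolution of the pair $(X,I\cdot J)$, so that $J\cO_Y=\cO_Y(-F)$ for some effective Cartier divisor $F$ on $Y$, and $\text{except}(\pi)+E_Y+F$ has simple normal crossing support. Then by the definition of the multiplier submodule and \cite[Theorem~A]{Mur21}, we have
\[\sJ(\omega_R,I,J^c)=\pi_*\cO_Y(\lceil K_Y-E_Y-cF\rceil).\]

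Next, iteratively applying Lemma \ref{bertini2} $k$ times (enlarging the SNC data at each step), I would produce elements $f_1,\dots,f_k\in J$ such that, writing $\pi^*\div_X f_i=F+F_i'$, each $F_i'$ is a reduced smooth divisor, the $F_i'$ are pairwise distinct and share no components with each other, with $F$, with $E_Y$, or with $\text{except}(\pi)$, and $\text{except}(\pi)+E_Y+F+\sum_i F_i'$ has simple normal crossing support. This uses the local hypothesis on $R$ to ensure the Bertini theorem applies, and the fact that the "general" condition is preserved under finite iteration. With this choice, setting $D=\frac{1}{k}\sum_i\div_X f_i$, the morphism $\pi$ is also a log resolution of $(X,I,c\cdot D)$, and
\[\pi^*(cD)=cF+\tfrac{c}{k}\sum_i F_i',\]
so that
\[\sJ(\omega_R,I,c\cdot D)=\pi_*\cO_Y\bigl(\bigl\lceil K_Y-E_Y-cF-\tfrac{c}{k}\textstyle\sum_i F_i'\bigr\rceil\bigr).\]

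Finally, I would compare ceilings. Since $k>c$, we have $0<c/k<1$, and by construction each $F_i'$ is a prime divisor distinct from every component appearing in $K_Y-E_Y-cF$ and from the other $F_j'$. Thus the coefficient of each $F_i'$ in the expression $K_Y-E_Y-cF-\frac{c}{k}\sum F_i'$ is exactly $-c/k\in(-1,0)$, which rounds up to $0$. Hence the two ceilings coincide and the two multiplier ideals are equal.

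The main obstacle is the iterated Bertini step: one must verify that the strict transforms $F_i'$ of general elements of $J$ remain smooth and form an SNC configuration with all previously chosen divisors on $Y$. This is essentially the content of Lemma \ref{bertini2} applied to the ideal $J\cdot(f_1,\dots,f_{i-1})$ (or rather to $J$ after updating the SNC data), together with the fact that the locus of admissible coefficient tuples is Zariski dense, so finitely many general conditions can be imposed simultaneously.
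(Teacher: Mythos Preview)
Your proposal is correct and follows essentially the same approach as the paper's proof: fix a single log resolution principalizing $J$, use iterated applications of Lemma~\ref{bertini2} to pick general $f_i\in J$ whose strict transforms $F_i'$ are reduced and meet the existing SNC data transversally, and then observe that the coefficients $-c/k\in(-1,0)$ round up to zero. One small quibble: the $F_i'$ need not be \emph{prime} (Bertini only gives smoothness, not irreducibility), but since they are reduced with components disjoint from the rest of the data, the ceiling comparison goes through unchanged.
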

\begin{proof}
\pushQED{\qed}
Let $\pi:Y\to X$ be a log resolution of $(X,I,J^c)$ and let $J\cO_Y=\cO_Y(-B)$ for $B\geq0$ $\bQ$-Cartier. By repeated applications of Lemma \ref{bertini2}, we can choose $f_1,\dots,f_k$ generally so that $\pi:Y\to X$ is a log resolution of $(X,I,c\cdot D)$.

Write $\div_Yf_i=B+A_i$ for $A_i$ effective and reduced. Then
        \begin{align*}
            \sJ(\omega_R,I,c\cdot D)&=\pi_*\cO_Y\left(\left\lceil K_Y-E_Y-c\pi^*D\right\rceil\right)\\
            &=\pi_*\cO_Y\left(\left\lceil K_Y-E_Y-c\cdot B-\frac{c}{k}\sum A_i\right\rceil\right)\\
            &=\pi_*\cO_Y(\lceil K_Y-E_Y-c\cdot B\rceil)\\
            &=\sJ(\omega_R,I,J^c).\qedhere
        \end{align*}
\end{proof}

\subsection*{Results}

Throughout this section, fix $X$, a normal, excellent, noetherian scheme over $\Spec\bQ$ with a dualizing complex and $I=\prod\sJ_k^{a_k}$ be an effective formal $\bQ$-linear combination of ideal sheaves on $X$. Whenever we have a log resolution of singularities or log regular alteration of the pair $(X,I)$, we will use the notation $E_Y=\sum a_kE_k$ where $\sJ_k\cO_Y=\cO_Y(-E_k)$ is the ideal sheaf of an effective Cartier divisor $E_k$ on $Y$ for each $k$. We begin by showing one direction of the containment.

\begin{prop}\label{cont1}
Let $X$ be a normal, excellent, noetherian scheme over $\Spec\bQ$ with a dualizing complex. Then the multiplier ideal $\sJ(X,I)$ satisfies
\[\sJ(X,I)\subseteq\sum_{\pi:Y\to X}\image\left(\sHom_X(\pi_*\omega_Y,\cO_X)\otimes_{\cO_X}\pi_*\omega_Y(-E_Y)\to\cO_X\right)\]
where $\pi:Y\to X$ ranges over all regular log alterations of $(X,I)$ and the map \[\sHom_X(\pi_*\omega_Y,\cO_X)\otimes_{\cO_X}\pi_*\omega_Y(-E_Y)\to\cO_X\] is the evaluation map.
\end{prop}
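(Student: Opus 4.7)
The plan is to work Zariski-locally on $X$, use a cyclic cover to trivialize $K_X+\Delta$, and then produce the evaluation map via a rescaled trace.

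Since both sides of the claimed inclusion are quasi-coherent subsheaves of $\cO_X$, I may assume $X = \Spec R$ is the spectrum of a local normal domain. By Proposition \ref{mult}, I fix a log-$\bQ$-Gorenstein pair $(X,\Delta)$ with $\sJ(X,I) = \sJ((X,\Delta);I)$; let $m$ denote the Cartier index of $K_X+\Delta$ and pick $g \in K(X)^*$ with $\div_X(g) = m(K_X+\Delta)$. Then I form the normalized cyclic cover $\rho: Z \to X$ of degree $m$ obtained by adjoining an $m$-th root $t$ of $g$: this is a finite surjective map of normal integral schemes, and the rational function $t \in K(Z)$ satisfies $\div_Z(t) = \rho^*(K_X+\Delta)$, so we may set $K_Z := \div_Z(t)$ as a canonical divisor on $Z$. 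Taking a log resolution $\tau: Y \to Z$ of $(Z, I\cO_Z)$, the composite $\pi := \rho \circ \tau$ is a log regular alteration of $(X, I)$.

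Given $f \in \sJ((X,\Delta); I)$, the candidate section of $\pi_*\omega_Y(-E_Y)$ is the rational function $f/t \in K(Y)$. Because $\pi^*(K_X+\Delta) = \div_Y(t)$ is integral Cartier, the defining condition $\div_Y(f) + \lceil K_Y - \pi^*(K_X+\Delta) - E_Y\rceil \geq 0$ for $f$ to lie in $\sJ((X,\Delta);I)$ rewrites directly as $\div_Y(f/t) + K_Y - \lfloor E_Y\rfloor \geq 0$, exactly the condition for $f/t$ to lie in $\cO_Y(\lceil K_Y - E_Y\rceil) = \omega_Y(-E_Y)$. For the required map, I would take
\[
\phi: \pi_*\omega_Y \,=\, \rho_*\tau_*\omega_Y \,\hookrightarrow\, \rho_*\omega_Z \,\xrightarrow{\,\cdot\, t\,}\, \rho_*\cO_Z \,\xrightarrow{\frac{1}{m}\Tr_{Z/X}}\, \cO_X,
\]
where the first arrow is the natural inclusion $\tau_*\omega_Y \hookrightarrow \omega_Z$ (an iso on the generic point), the middle arrow uses the trivialization $\omega_Z = t^{-1}\cO_Z$ coming from $K_Z = \div_Z(t)$, and dividing by $m$ is permissible since $\bQ \subseteq R$. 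A direct calculation then gives $\phi(f/t) = \frac{1}{m}\Tr_{Z/X}(f) = f$, placing $f$ in the required image.

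I expect the main delicate point to be the bookkeeping of the rounding: the multiplier ideal formula involves $\lceil K_Y - \pi^*(K_X+\Delta) - E_Y\rceil$, while the natural reading of $\omega_Y(-E_Y)$ involves $\lfloor E_Y\rfloor$, and these have to match on the nose. The integrality of $\pi^*(K_X+\Delta)$ forced by passing to the cover is precisely what makes this work: the outer ceiling collapses to $K_Y - \pi^*(K_X+\Delta) - \lfloor E_Y\rfloor$. Everything else is formal --- the Grauert-Riemenschneider-type injection $\tau_*\omega_Y \hookrightarrow \omega_Z$ and a straightforward identification of sheaves via the trivialization $t$.
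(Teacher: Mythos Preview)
There is a genuine gap: your identification $K_Z = \div_Z(t)$ fails. You correctly have $\div_Z(t) = \rho^*(K_X+\Delta)$, but by Riemann--Hurwitz $K_Z = \rho^*K_X + \Ram_\rho$, so you are implicitly asserting $\Ram_\rho = \rho^*\Delta$. For the normalized cover $t^m = g$, over a prime $P$ with $\operatorname{ord}_P(g) = a$ the ramification index is $m/\gcd(m,a)$. Taking $\Delta$ as produced by Proposition~\ref{mult} (so every coefficient of $\Delta$ is $1/m$), at a component $P$ of $\Delta$ disjoint from $\supp K_X$ one has $a = 1$ and ramification index $m$, so $\Ram_\rho$ has coefficient $m-1$ at the prime above $P$ while $\rho^*\Delta$ has coefficient $1$; these agree only when $m=2$. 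Consequently your middle arrow ``$\cdot\, t:\omega_Z\to\cO_Z$'' is not well-defined---$t\cdot\omega_Z = \cO_Z(K_Z - \div_Z(t))$ is not contained in $\cO_Z$---and the map $\phi$ you build does not land in $\cO_X$.

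The paper's proof uses the same underlying mechanism---evaluate via the trace twisted by a section cutting out $\pi^*(K_X+\Delta)$---but arranges the cover \emph{after} resolving: first take a log resolution $\tilde X\to X$ of $(X,\Delta,I)$, then invoke \cite[Lemma~4.5]{BST15} to produce a finite cover $Y\to\tilde X$ on which $\pi^*(K_X+\Delta)=\div(g)$ and $E_Y$ are integral Cartier, and conclude directly that $\Tr(g\cdot\pi_*\omega_Y(-E_Y)) = \Tr\bigl(\pi_*\cO_Y(K_Y-\pi^*(K_X+\Delta)-E_Y)\bigr) = \sJ((X,\Delta);I)$ via \cite[Theorem~8.1]{BST15}. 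This sidesteps any need to trivialize $\omega_Z$, and it also avoids your second implicit claim---that a given $f\in\sJ((X,\Delta);I)$ already satisfies the divisorial inequality on $Y$ rather than merely lying in the \emph{trace} of the relevant pushforward---which is a separate point requiring justification since $Y\to X$ is not birational.
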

\begin{proof}
We check containment locally. By Proposition \ref{mult}, let $\sJ(X,I)=\sJ((X,\Delta);I)$ for some $\Delta\geq0$ such that $K_X+\Delta$ is $\bQ$-Cartier. Let $\tau:\tilde{X}\to X$ be a log resolution of the triple $(X,\Delta,I)$. By \cite[Lemma~4.5]{BST15} we can finite a finite cover $\rho:Y\to\tilde{X}$ with $\pi=\tau\circ\rho$ such that $\pi*(K_X+\Delta)$ and $\rho^*E_{\tilde{X}}=E_Y$ are both effective Cartier divisors. Let $\pi^*(K_X+\Delta)=\div(g)\in\cO_Y$. Using the projection formula we get that
\begin{align*}
\Tr(g\cdot\pi_*\omega_Y(-E_Y))&=\Tr(\pi_*\cO_Y(K_Y-\div g-E_Y))\\
&=\Tr(\pi_*\cO_Y(K_Y-\pi^*(K_X+\Delta)-E_Y))\\
&=\sJ((X,\Delta);I)\\
&=\sJ(X,I)
\end{align*}
with the penultimate equality coming from Theorem 8.1 of \cite{BST15}.
\end{proof}

For the reverse containment, multiplier submodules on finite covers will play a key role. The key idea is that for $\pi:Y\to X$ a regular alteration, any map $\phi:\pi_*\omega_Y\to\cO_X$ factors through the multiplier submodule $\sJ(\omega_Z,\Gamma)$ of some divisor $\Gamma$ on $X$, where $\rho:Z\to X$ is the finite part of the Stein factorization of $\pi$. This ultimately implies, thanks to a key lemma, $\image\phi\subseteq\sJ(X)$ (as well as the analogous statement for pairs $(X,I)$). Before we proceed, we need the following fact about how multiplier submodules transform when we enlarge finite covers of our base.

\begin{lemma}\label{finmult}
Let $\Spec S'\xrightarrow{\phi} \Spec S\xrightarrow{\psi}\Spec R$ be finite surjective maps of normal, excellent, noetherian affine schemes over $\Spec\bQ$ with dualizing complexes with $\Gamma\geq0$ a $\bQ$-Cartier $\bQ$-divisor on $\Spec S$ and $I$ an effective formal $\bQ$-linear combination of ideal sheaves on $\Spec S$. Then
 \[\image\Tr_{S/R}\left(\psi_*\sJ(\omega_S,\Gamma,I)\right)=\image\Tr_{S'/R}\left(\psi_*\phi_*\sJ(\omega_{S'},\phi^*\Gamma,IS')\right).\]
\end{lemma}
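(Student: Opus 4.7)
The plan is to use the tower law for the trace map, $\Tr_{S'/R} = \Tr_{S/R}\circ\psi_*\Tr_{S'/S}$, which immediately reduces the lemma to the equality
\[
\Tr_{S'/S}\bigl(\phi_*\sJ(\omega_{S'},\phi^*\Gamma,IS')\bigr) = \sJ(\omega_S,\Gamma,I).
\]
Applying $\psi_*$ and then $\Tr_{S/R}$ to both sides of this reduced equality recovers the original statement, since $\Tr_{S'/R}(\psi_*\phi_*(-)) = \Tr_{S/R}(\psi_*\Tr_{S'/S}(\phi_*(-)))$.

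To prove the reduced equality, I would choose compatible log resolutions. Take $\sigma: W\to\Spec S'$ a log resolution of $(\Spec S',\phi^*\Gamma,IS')$ chosen large enough that the composition $\phi\circ\sigma$ factors through a log resolution $\tau: Y\to\Spec S$ of $(\Spec S,\Gamma,I)$ via a proper generically finite map $\mu: W\to Y$, so that $\phi\sigma=\tau\mu$. Then $\phi\sigma: W\to\Spec S$ is a log regular alteration of $(\Spec S,\Gamma,I)$ with Stein factorization $W\xrightarrow{\sigma}\Spec S'\xrightarrow{\phi}\Spec S$, and by a multiplier submodule analogue of Theorem 8.1 of \cite{BST15},
\[
\sJ(\omega_S,\Gamma,I) = \Tr_{W/S}\bigl((\phi\sigma)_*\cO_W(\lceil K_W-(\phi\sigma)^*\Gamma-E_W\rceil)\bigr).
\]

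On the other hand, $\phi_*\sJ(\omega_{S'},\phi^*\Gamma,IS') = (\phi\sigma)_*\cO_W(\lceil K_W-(\phi\sigma)^*\Gamma-E_W\rceil)$ as a subsheaf of $\phi_*\omega_{S'}$, using $\sigma^*\phi^*\Gamma=(\phi\sigma)^*\Gamma$ and the fact that the divisor $E_W$ attached to $IS'$ on $W$ coincides with the one attached to $I$. Factoring $\Tr_{W/S}=\Tr_{S'/S}\circ\phi_*\Tr_{W/S'}$ and noting that, since $\sigma$ is proper birational between normal schemes, $\Tr_{W/S'}$ is the reflexification inclusion that already realizes the multiplier submodule as a subsheaf of $\omega_{S'}$, the two computations of the trace agree, giving the desired equality.

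The main technical obstacle is justifying the multiplier submodule version of \cite[Theorem~8.1]{BST15}: for any log regular alteration $\pi: Z\to X$ of $(X,\Gamma,I)$, the multiplier submodule $\sJ(\omega_X,\Gamma,I)$ is realized as $\Tr_{Z/X}(\pi_*\cO_Z(\lceil K_Z-\pi^*\Gamma-E_Z\rceil))$. I expect this to follow by adapting the BST15 argument: Stein-factor the alteration into a resolution followed by a finite map, use \cite[Theorem~A]{Mur21} to obtain the vanishing of higher direct images for the birational factor, and apply standard Grothendieck duality/trace theory for the finite factor. Once this general fact is in hand, everything else is essentially formal manipulation of trace factorizations.
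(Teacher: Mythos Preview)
Your proposal is correct and follows the same skeleton as the paper's proof: both use the tower law $\Tr_{S'/R}=\Tr_{S/R}\circ\psi_*\Tr_{S'/S}$ (this is \cite[Lemma~2.3]{BST15}) to reduce to the single equality
\[
\Tr_{S'/S}\bigl(\phi_*\sJ(\omega_{S'},\phi^*\Gamma,IS')\bigr)=\sJ(\omega_S,\Gamma,I),
\]
and both obtain that equality from (a version of) \cite[Theorem~8.1]{BST15}.

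The one place you diverge is in how you handle the ideal combination $I$. You propose to prove directly a multiplier-\emph{submodule}-with-ideals variant of \cite[Theorem~8.1]{BST15}, reconstructing the Stein factorization / Grothendieck duality argument by hand. The paper instead invokes Proposition~\ref{pos} first to replace $I$ by a $\bQ$-Cartier divisor $c\cdot D$ (and $IS'$ by $\phi^*(c\cdot D)$), so that $\sJ(\omega_S,\Gamma,I)=\sJ(\omega_S,\Gamma+cD)$ is a multiplier submodule of a genuine $\bQ$-Cartier divisor; after this reduction, \cite[Theorem~8.1]{BST15} applies off the shelf, because any log regular alteration of $(\Spec S',\phi^*\Gamma,\phi^*D)$ is automatically one of $(\Spec S,\Gamma,D)$. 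Your route would work, but the paper's shortcut via Proposition~\ref{pos} avoids reproving any part of \cite{BST15} and keeps the argument to a couple of lines.
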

\begin{proof}
\pushQED{\qed}
Use Proposition \ref{pos} to replace $I$ with some divisor $D$ and $IS'$ with $\phi^*D$. Because any regular alteration of $\Spec S'$ will also be a regular alteration of $\Spec S$, \cite[Theorem~8.1]{BST15} gives
\[\Tr_{S'/S}\left(\phi_*\sJ(\omega_{S'},\phi^*\Gamma,IS')\right)=\sJ(\omega_S,\Gamma,I).\]
\cite[Lemma~2.3]{BST15} tells us that $\Tr_{S'/R}=\Tr_{S/R}\circ\psi_*\Tr_{S'/S}$ and so we get that 
\begin{align*}
\image\Tr_{S/R}\left(\psi_*\sJ(\omega_S,\Gamma,I)\right)&=\image\Tr_{S/R}\circ\psi_*\Tr_{S'/S}\left(\phi_*\sJ(\omega_{S'},\phi^*\Gamma,IS')\right)\\
&=\image\Tr_{S'/R}\left(\psi_*\phi_*\sJ(\omega_{S'},\phi^*\Gamma,IS')\right). \qedhere
\end{align*}
\end{proof}

\begin{lemma}\label{fg}
Let $\rho:\Spec S\to\Spec R$ be a finite surjective map of normal, excellent, noetherian affine schemes over $\Spec\bQ$ with dualizing complexes and let $I=\prod\sJ_k^{a_k}$ be a formal effective $\bQ$-linear combination of ideal sheaves on $\Spec R$. Suppose we have a $\bQ$-Cartier divisor $\Gamma\geq \rho^*K_R$ with $n\Gamma=\div g$. Then 
\[\Tr_{S/R}\left(\rho_*\sJ(\omega_S,\Gamma,IS)\right)\subseteq\sJ(R,I).\]
In the case where $I=R$ and $\Tr_{S/R}\left(\rho_*\sJ(\omega_S,\Gamma)\right)=R$, this implies that $R$ has klt type.
\end{lemma}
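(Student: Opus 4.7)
The plan is to compare the multiplier submodule $\sJ(\omega_S,\Gamma,IS)$ on $\Spec S$ with the multiplier ideal $\sJ(R,I)$ on $\Spec R$ via a common regular alteration $\pi\colon Y\to\Spec R$, using monotonicity of the trace along a divisor inequality derived from the hypothesis $\Gamma\geq\rho^{\natural}K_R$. I work locally on $\Spec R$ and use Proposition \ref{mult} to fix an $m$-compatible boundary $\Delta\geq 0$ so that $\sJ(R,I)=\sJ((R,\Delta);I)=\sJ_m(R,I)$ for sufficiently divisible $m$. Lemma \ref{finmult} permits replacing $\Spec S$ by any further finite cover of $\Spec R$ (pulling $\Gamma$ back) without altering the trace image, so I choose a log resolution $\tau\colon Y\to\Spec S$ of $(\Spec S,\Gamma,IS)$ fine enough that $\pi:=\rho\circ\tau$ is simultaneously a log regular alteration of $(\Spec R,\Delta,I+\cO_R(-mK_R))$ dominating the log resolution used to define $\Delta$.

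The key step is the divisor inequality $\tau^*\Gamma\geq\tfrac{1}{m}\pi^{\natural}(mK_R)$ on $Y$. From $\Gamma\geq\rho^{\natural}K_R$ I have $\cO_S(-\Gamma)\subseteq\cO_S(-\rho^{\natural}K_R)$; pulling back along $\tau$ and reflexifying on the smooth $Y$ gives $\tau^*\Gamma\geq\tau^{\natural}\rho^{\natural}K_R$. A codimension-one valuative check using multiplicativity of ramification indices identifies $\tau^{\natural}\rho^{\natural}K_R=\pi^{\natural}K_R$. Combining with the monotonicity $\pi^{\natural}K_R\geq\tfrac{1}{m}\pi^{\natural}(mK_R)$ extracted from the proof of Proposition \ref{max} completes this step.

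Monotonicity of the ceiling now yields $\cO_Y(\lceil K_Y-\tau^*\Gamma-E_Y\rceil)\subseteq\cO_Y(\lceil K_{m,Y/R}-E_Y\rceil)$. Pushing forward via $\pi_*=\rho_*\tau_*$, the left side becomes $\rho_*\sJ(\omega_S,\Gamma,IS)$; applying $\Tr_{Y/R}=\Tr_{S/R}\circ\rho_*\Tr_{Y/S}$ (Lemma 2.3 of \cite{BST15}) and noting that $\Tr_{Y/S}$ is the canonical inclusion $\tau_*\omega_Y\hookrightarrow\omega_S$ delivers $\Tr_{S/R}(\rho_*\sJ(\omega_S,\Gamma,IS))$. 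On the right, Theorem 8.1 of \cite{BST15} applied to $\pi$ as a log regular alteration of $(\Spec R,\Delta,I)$, together with the $m$-compatibility relation $K_Y+\Delta_Y-\pi^*(K_R+\Delta)=K_{m,Y/R}$ (which propagates from the original log resolution by birational pullback along $q\colon Y\to Z$) and the fact that $\lfloor\Delta_Y\rfloor=0$, identifies $\Tr_{Y/R}(\pi_*\cO_Y(\lceil K_{m,Y/R}-E_Y\rceil))$ with $\sJ((R,\Delta);I)=\sJ(R,I)$.

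The main technical hurdle is the functoriality $\tau^{\natural}\rho^{\natural}K_R=\pi^{\natural}K_R$: both sides are rank-one reflexive $\cO_Y$-submodules of $K(Y)$ constructed from $\cO_R(-K_R)\cdot\cO_Y$ by iterated reflexification, and I would verify equality at each divisorial valuation $v$ of $Y$ via $v(\tau^{\natural}\rho^{\natural}K_R)=e(v/v|_S)\,e(v|_S/v|_R)\,v|_R(K_R)=e(v/v|_R)\,v|_R(K_R)=v(\pi^{\natural}K_R)$, which determines these reflexive sheaves by their codimension-one behavior on the smooth $Y$.
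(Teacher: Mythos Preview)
Your approach has a genuine gap at the functoriality claim $\tau^\natural\rho^\natural K_R = \pi^\natural K_R$. The natural pullback is \emph{not} functorial under composition: in general one only has $\pi^\natural D \geq \tau^\natural\rho^\natural D$, because $\cO_R(-D)\cdot\cO_S \subseteq (\cO_R(-D)\cdot\cO_S)^{\vee\vee} = \cO_S(-\rho^\natural D)$, and extending to $Y$ and taking divisors reverses this inclusion. Your valuative formula $v(\pi^\natural K_R) = e(v/v|_R)\,v|_R(K_R)$ is only valid when the center of $v$ on $\Spec R$ is itself a codimension-one point, so that $\cO_R(-K_R)$ is locally principal there. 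For an exceptional divisor $v$ on $Y$ centered at a higher-codimension point of $\Spec R$, the ideal $\cO_R(-K_R)$ needs several local generators $f_i$ and one has $v(\pi^\natural K_R) = \min_i v(f_i)$, which is not computed by ramification indices. The upshot is that your two inequalities $\tau^*\Gamma \geq \tau^\natural\rho^\natural K_R$ and $\pi^\natural K_R \geq \tau^\natural\rho^\natural K_R$ cannot be combined to yield $\tau^*\Gamma \geq \tfrac{1}{m}\pi^\natural(mK_R)$: the second one points the wrong way. (A secondary issue is that the $m$-compatibility identity and the condition $\lfloor\Delta_Y\rfloor=0$ are established for birational $\pi$, and can fail on an alteration once ramification along components of $\Delta$ intervenes.)

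The paper's proof sidesteps this obstruction by a Galois averaging trick rather than any functoriality of $\pi^\natural$. After passing to a generically Galois cover via Lemma~\ref{finmult}, one replaces $g$ by a general $f \in S(-n\rho^*K_R)$ (Proposition~\ref{pos}) and then by the norm $h = \prod_{\sigma\in G}\sigma(f)$. The key point is that $h$ lands in $R(-n|G|K_R)$, not merely in a reflexive hull on $S$, so one can compare directly on $R$: $\Tr_{S/R}$ carries $\sJ(\omega_S,h^{1/n|G|},IS)$ into $\sJ(\omega_R,h^{1/n|G|},I)\subseteq \sJ(\omega_R,R(-n|G|K_R)^{1/n|G|},I)\subseteq\sJ(R,I)$ by Remark~\ref{rmk} and Proposition~\ref{max}. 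The remaining work is the equality $\sJ(\omega_S,f^{1/n},IS)=\sJ(\omega_S,h^{1/n|G|},IS)$, proved on a $G$-equivariant log resolution using that $S(-n\rho^*K_R)$ is Galois-stable. This descent-to-$R$ manoeuvre is precisely what your direct divisor comparison on $Y$ is missing.
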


\begin{proof}
Without lost of generality, we can assume $R\to S$ is a map of local rings. Fix $K_R\geq0$ and note that we can assume $R\to S$ is generically Galois. Indeed, let $S'$ be a the normalization of $S$ inside a Galois closure of $K(S)$ over $K(R)$. This a finite extension, so by replacing $\Gamma$ with its pullback along this extension and invoking Lemma \ref{finmult}, we can assume $R\to S$ is generically Galois.

Because $\div g\geq n\rho^*K_R$ we know that $g\in S(-n\rho^*K_R)$. Choosing a general element $f\in S(-n\rho^*K_R)$, Proposition \ref{pos} implies
\begin{align*}
\sJ(\omega_S,g^{1/n},IS)\subseteq \sJ(\omega_S,S(-n\rho^*K_R)^{1/n},IS)=\sJ(\omega_S,f^{1/n},IS).
\end{align*}
Let $G=\Gal(K(S)/K(R))$ and let
\[h=\prod_{\sigma\in G}\sigma(f)\]
Then we claim that
\[\sJ(\omega_S,h^{1/(n|G|)},IS)=\sJ(\omega_S,f^{1/n},IS)\]
To see this, let $Y$ be a $G$-equivariant log resolution of singularities of $(\Spec S,S(-n\rho^*K_R),I)$ (see \cite[Remark~2.1.5(ii)]{Temkin:2023}). Then for any $\sigma\in G$ we have the following commutative square
\[
\begin{tikzcd}
R(-nK_R)S\arrow[r]\arrow[d,"\sigma"]&S(-n\rho^*K_R)\arrow[d,"\sigma"]\\
R(-nK_R)S\arrow[r]&S(-n\rho^*K_R)
\end{tikzcd}
\]
The horizontal maps are given by reflexification and are thus functorial and so since the lefthand vertical map is an isomorphism, so is the righthand vertical map. Thus, the ideal $S(-n\rho^*K_R)$ is stable under the Galois action. We note that for $S(-n\rho^*K_R)\cdot\cO_Y=\cO_Y(-F)$, this implies that for any $\sigma\in G$,

\[\left\lfloor \frac{1}{n}F\right\rfloor=\left\lfloor\frac{1}{n}\sigma F\right\rfloor=\left\lfloor\frac{1}{n}\div_Yf\right\rfloor\]

and so the portion of $\frac{1}{n}\div_Yf$ that is not fixed by the Galois action will have coefficients less than $1$. This implies that
\[\left\lfloor\frac{1}{n|G|}\div_Yh\right\rfloor=\left\lfloor\frac{1}{n}\div_Yf\right\rfloor,\] 
proving the claim that
\[\sJ(\omega_S,h^{1/(n|G|)},IS)=\sJ(\omega_S,f^{1/n},IS).\]
Then because $h$ is Galois invariant and thus in $R(-n|G|K_R)$, an application of Lemma \ref{finmult} gives
\begin{center}
\begin{tikzcd}
\rho_*\sJ(\omega_S,h^{1/n|G|},IS)\arrow[r,"\Tr_{S/R}"]&\sJ(\omega_R,h^{1/n|G|},I)\subseteq\sJ(\omega_R,R(-n|G|K_R)^{1/n|G|},I)\subseteq R
\end{tikzcd}
\end{center}
and thus
\[\Tr_{S/R}(\rho_*\sJ(\omega_S,g^{1/n},IS)\subseteq\sJ(\omega_R,R(-n|G|K_R)^{1/n|G|},I).\]
By Proposition \ref{max} and the chain of equalities in Remark \ref{rmk},
\[\sJ(\omega_R,R(-n|G|K_R)^{1/n|G|},I)\subseteq\sJ(R,I),\]
with equality if we choose $n$ such that $n|G|$ is sufficiently divisible.
\end{proof}

Our goal now is to show that given a $\pi:Y\to X$ a log regular alteration of the pair $(X,I)$ and $\phi:\pi_*\omega_Y(-E_Y)\to\cO_Y$, we can find a divisor on a finite cover of $X$ such that the trace of the multiplier submodule corresponding to that divisor agrees with the image of $\phi$.

\begin{lemma}\label{cover}
Let $X=\Spec R$ be a normal, excellent, noetherian scheme over $\Spec\bQ$ with a dualizing complex and consider $\pi:Y\to X$ a log regular alteration of the pair $(X,I)$ and $\phi:\pi_*\omega_Y\to\cO_X$. Let $\rho:Z\to X$ be the finite portion of the Stein factorization. Then we can find a principal divisor $\Gamma=\div(h)\geq\rho^*K_X$ on $Z$ such that 
\[\image\left(\Tr:\rho_*\sJ(\omega_Z,\Gamma,I\cO_Z)\to \cO_X\right)=\image\left(\phi:\pi_*\omega_Y(-E_Y)\to\cO_X\right).\]
\end{lemma}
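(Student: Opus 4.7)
The plan is to use Grothendieck duality for the finite part $\rho: Z\to X$ of the Stein factorization to identify an arbitrary $\phi:\pi_*\omega_Y\to\cO_X$ with multiplication by a rational function $h\in K(Z)$, from which the principal divisor $\Gamma:=\div h$ will emerge naturally. I will then directly compute the image of the restriction $\phi|_{\pi_*\omega_Y(-E_Y)}$ and match it with the trace of $\rho_*\sJ(\omega_Z,\Gamma,I\cO_Z)$.

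First I will write $\pi_*\omega_Y=\rho_*\tau_*\omega_Y$ and apply Grothendieck duality for the finite morphism $\rho$ to obtain
\[\Hom_{\cO_X}(\rho_*\tau_*\omega_Y,\cO_X)\cong\rho_*\Hom_{\cO_Z}(\tau_*\omega_Y,\omega_{Z/X}),\]
with $\omega_{Z/X}=\omega_Z\otimes\rho^*\omega_X^{-1}\cong\omega_Z(-\rho^*K_X)$. Since $\tau$ is birational and $Z$ is normal, $\tau_*\omega_Y\subseteq\omega_Z$ is an equality outside a subset of codimension at least two; combined with the fact that $\omega_Z$ and $\omega_{Z/X}$ are rank-one reflexive on $Z$, any such map is given by multiplication by a unique rational function $h\in K(Z)$, and the condition that this multiplication land in $\omega_{Z/X}$ on the big open where $\tau_*\omega_Y=\omega_Z$ forces $\div h\geq\rho^*K_X$. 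Setting $\Gamma:=\div h$ produces the required principal divisor.

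Next I will verify that under this identification $\phi$ factors as the composition
\[\rho_*\tau_*\omega_Y\xrightarrow{\;\cdot h\;}\rho_*\omega_{Z/X}\xrightarrow{\;\text{counit}\;}\cO_X,\]
where the counit is the Grothendieck trace $\Tr_{Z/X}:\rho_*\omega_Z\to\omega_X$ twisted by $\omega_X^{-1}$. Restricting to $\pi_*\omega_Y(-E_Y)$, the image becomes $\Tr_{Z/X}(\rho_*\tau_*(h\cdot\omega_Y(-E_Y)))$. A direct computation (using that $h$ has $\div_Yh=\tau^*\Gamma$ and interpreting $\omega_Y(-E_Y)$ as $\cO_Y(\lceil K_Y-E_Y\rceil)$) gives
\[h\cdot\omega_Y(-E_Y)=\omega_Y(-E_Y-\tau^*\Gamma)=\cO_Y(\lceil K_Y-\tau^*\Gamma-E_Y\rceil),\]
and after pushing forward by $\tau$ (possibly refining $Y$ so that $\tau$ is a log resolution of $(Z,\Gamma,I\cO_Z)$) this matches $\sJ(\omega_Z,\Gamma,I\cO_Z)$ as in Definition \ref{multsubmod}, yielding the claimed equality of images.

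The hardest part will be the duality step: making precise the bijection between $\phi\in\Hom_X(\pi_*\omega_Y,\cO_X)$ and rational functions $h$ with $\div h\geq\rho^*K_X$, and verifying that the map $\tau_*\omega_Y\to\omega_{Z/X}$ corresponding to $\phi$ extends from the big open where $\tau_*\omega_Y=\omega_Z$ to all of $Z$ by reflexivity of the target. A secondary concern is arranging that $\tau$ be a log resolution of $(Z,\Gamma,I\cO_Z)$, which can be handled by further blowing up $Y$ without affecting the image of $\phi$ on $\pi_*\omega_Y(-E_Y)$ (using Grauert--Riemenschneider vanishing for the extra refinement).
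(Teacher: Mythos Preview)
Your proposal is correct and follows essentially the same route as the paper: both use Grothendieck duality for the finite map $\rho$ to identify $\phi$ with $\Tr_{Z/X}(h\cdot-)$ for some $h\in\Gamma(Z,\cO_Z(-\rho^*K_X))$, then read off $\Gamma=\div h\geq\rho^*K_X$ and compute the image via the projection formula. The only cosmetic differences are that the paper first factors $\phi$ through $(\pi_*\omega_Y)^{S_2}\cong\rho_*\omega_Z$ before dualizing (rather than dualizing $\rho_*\tau_*\omega_Y$ and then invoking reflexivity), and that your ``secondary concern'' about refining $Y$ is unnecessary: since $\Gamma$ is principal and hence Cartier, $\lceil K_Y-\tau^*\Gamma-E_Y\rceil=\lceil K_Y-E_Y\rceil-\tau^*\Gamma$ and the projection formula gives $\tau_*\omega_Y(-E_Y-\tau^*\Gamma)=\sJ(\omega_Z,\Gamma,I\cO_Z)$ already on the original $\tau$.
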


\begin{proof}
\pushQED{\qed}
Fix $K_X\geq0$. Because $X$ is $S_2$, $\phi$ factors through $(\pi_*\omega_Y)^{S_2}\cong\rho_*\omega_Z$, so by abuse of notation, we consider $\phi:\rho_*\omega_Z\to\cO_X$. By a standard Grothendieck duality argument we see
\begin{align*}
\rho_*\cO_Z(-\rho^*K_X)&\simeq\rho_*\sHom_Z(\omega_Z\otimes_Z\rho^*\omega_X,\omega_Z)\\
&\simeq\sHom_X(\rho_*\omega_Z\otimes_X\omega_X,\omega_X)\\
&\simeq\sHom_X(\rho_*\omega_Z,\cO_X)
\end{align*}
Taking global sections, we get that $\Gamma(Z,\cO_Z(-\rho^*K_X))\simeq\Hom_X(\rho_*\omega_Z,\cO_X)$. Tracing through the sequence of isomorphisms, we claim that this isomorphism sends $s\in\Gamma(Z,\cO_Z(-\rho^*K_X))$ to $\Tr_{Z/X}(s-)$. To see this, note that, at the level of global sections, the first isomorphism above sends $s$ to the multiplication-by-$s$ map $\mu_s:\omega_Z\otimes_Z\rho^*\omega_X\to\omega_Z$. The second isomorphism sends $\mu_s$ to $\Tr_{Z/X}\circ\rho_*\mu_s=\Tr_{Z/X}(s-)$ via Grothendieck duality. The final isomorphism can be understood by viewing $\Tr_{Z/X}(s-):K(Z)\to K(X)$ and thinking of the isomorphism as restriction to different $\cO_X$-submodules of $K(Z)$.

Then, let $\phi=\Tr_{Z/X}(h-)$ for $h\in\Gamma(Z,\cO_Z(-\rho^*K_X))$. Viewing $h$ as an element of $\cO_Z$, we get a divisor $\div h$ such that $\div h\geq\rho^*K_X$ and, by the projection formula, such that
\[\phi(\pi_*\omega_Y(-E_Y))=\Tr(h\cdot\pi_*\omega_Y(-E_Y))=\Tr(\rho_*\sJ(\omega_Z,\div h,I\cO_Z)). \qedhere\]
\end{proof}

We are now ready to show the reverse containment holds.

\begin{prop}\label{cont2}
With notation as above
\[\sJ(X)\supseteq\sum_{\pi:Y\to X}\image\left(\sHom_X(\pi_*\omega_Y,\cO_X)\otimes_{\cO_X}\pi_*\omega_Y(-E_Y)\to\cO_X\right)\]
\end{prop}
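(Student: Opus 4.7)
The plan is to verify the stated containment locally, so we may assume $X = \Spec R$ is affine and local. It then suffices to show that for every log regular alteration $\pi: Y \to X$ of $(X,I)$ and every $\phi \in \sHom_X(\pi_*\omega_Y, \cO_X)$, the image of the restriction $\phi|_{\pi_*\omega_Y(-E_Y)}$ is contained in $\sJ(X,I)$, since the image of the evaluation map in the statement is precisely the sum of such subsheaves.

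First, I would pass to the Stein factorization
\[\begin{tikzcd}
Y \arrow[rr,"\pi"] \arrow[rd,"\tau"'] & & X \\
 & Z \arrow[ru,"\rho"'] &
\end{tikzcd}\]
in which $\rho: Z \to X$ is finite surjective and $\tau: Y \to Z$ is a log resolution. Writing $Z = \Spec S$, this reduces the analysis to the finite cover $\rho$, which is the setting of Lemma \ref{fg}. Second, I would apply Lemma \ref{cover} to produce a principal divisor $\Gamma = \div(h)$ on $Z$ with $\Gamma \geq \rho^* K_X$ such that
\[\image\bigl(\phi|_{\pi_*\omega_Y(-E_Y)}\bigr) = \image\bigl(\Tr_{S/R} \colon \rho_*\sJ(\omega_S, \Gamma, IS) \to R\bigr).\]
Since $\Gamma$ is integral and principal, the hypothesis $n\Gamma = \div(g)$ of Lemma \ref{fg} is satisfied with $n = 1$ and $g = h$. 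Third, I would feed this $\Gamma$ into Lemma \ref{fg} applied to $\rho: \Spec S \to \Spec R$, obtaining
\[\Tr_{S/R}\bigl(\rho_*\sJ(\omega_S, \Gamma, IS)\bigr) \subseteq \sJ(R, I).\]
Chaining the two displays yields $\image(\phi|_{\pi_*\omega_Y(-E_Y)}) \subseteq \sJ(X, I)$, which completes the proof after summing over all $\phi$ and all $\pi$.

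At this point essentially no obstacle remains: Lemmas \ref{cover} and \ref{fg} have done the heavy lifting. The former uses Grothendieck duality to realize any $\phi$ as the trace of multiplication by a section of $\cO_Z(-\rho^*K_X)$, while the latter uses a Galois-closure construction together with the de Fernex--Hacon description $\sJ(X,I) = \sJ(\omega_X, (\omega_X^{(-n)})^{1/n}, I)$ of Remark \ref{rmk} to bound the resulting traces. The only task left for this proposition is to assemble these two ingredients in the correct order; if there is a subtlety it lies in verifying that a single $\phi$ automatically lands in the image-of-trace form of Lemma \ref{fg}, which is exactly the content that Lemma \ref{cover} provides.
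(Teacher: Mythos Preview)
Your proof is correct and follows essentially the same approach as the paper: localize, invoke Lemma~\ref{cover} to convert an arbitrary $\phi$ into a trace of a multiplier submodule on the finite part $Z$ of the Stein factorization, and then apply Lemma~\ref{fg} to land inside $\sJ(X,I)$. Your write-up is in fact slightly more explicit than the paper's, spelling out the Stein factorization and the verification that $\Gamma=\div(h)$ satisfies the $\bQ$-Cartier hypothesis of Lemma~\ref{fg} with $n=1$.
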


\begin{proof}
Since both these objects are $\cO_X$-submodules, we can check containment locally. By Lemma \ref{cover}, given $\pi:Y\to X$ a log regular alteration of the pair $(X,I)$ and $\phi:\pi_*\omega_Y\to\cO_X$, we can find a finite cover $Z\to X$ and a divisor $\Gamma$ on $Z$ such that $\Tr(\sJ(\omega_Z,\Gamma,I))=\phi(\pi_*\omega_Y(-E_Y))$. Then by Proposition \ref{fg} we can find $\Delta$ on $X$ such that $\image\phi\subseteq\sJ(X,\Delta,I)\subseteq\sJ(X,I)$.
\end{proof}

Together, Lemmas \ref{cont1} and \ref{cont2} give us our main result.

\begin{thm}[Theorem \ref{main}]
Let $X$ be a normal, excellent, noetherian scheme over $\Spec\bQ$ with a dualizing complex and $I=\prod\sJ_k^{a_k}$ be a formal effective $\bQ$-linear combination of ideal sheaves on $X$. The multiplier ideal $\sJ(X,I)$ in the sense of \cite{dFH09} can be realized as
\[\sJ(X,I)=\sum_{\pi:Y\to X}\image\left(\sHom_X(\pi_*\omega_Y,\cO_X)\otimes_{\cO_X}\pi_*\omega_Y(-E_Y)\to\cO_X\right)\]
where $\pi:Y\to X$ ranges over all log regular alterations of $(X,I)$ with $E_Y=\sum a_kE_k$ for $\sJ_k\cO_Y=\cO_Y(-E_k)$ and the map $\sHom_X(\pi_*\omega_Y,\cO_X)\otimes_{\cO_X}\pi_*\omega_Y(-E_Y)\to\cO_X$ is the evaluation map.
\end{thm}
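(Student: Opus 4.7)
The plan is to prove the equality by establishing the two containments separately, both checked locally since both sides are $\cO_X$-submodules of $\cO_X$. The forward containment $\sJ(X,I)\subseteq\sum\image(\cdots)$ amounts to exhibiting a \emph{single} log regular alteration whose associated image already realizes $\sJ(X,I)$; the reverse amounts to showing that for \emph{every} log regular alteration $\pi:Y\to X$ and every map $\phi:\pi_*\omega_Y\to\cO_X$, the image of $\phi$ restricted to $\pi_*\omega_Y(-E_Y)$ lies in $\sJ(X,I)$. The bridge between the de Fernex--Hacon definition and the classical pair-based theory is Proposition \ref{mult}, which locally writes $\sJ(X,I)=\sJ((X,\Delta);I)$ for an effective $\Delta$ with $K_X+\Delta$ being $\bQ$-Cartier.

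For the forward direction, after localizing and applying Proposition \ref{mult}, I would take a log resolution of the triple $(X,\Delta,I)$ and, using \cite[Lemma~4.5]{BST15}, pass to a finite cover $\pi:Y\to X$ on which both $\pi^*(K_X+\Delta)=\div g$ and $E_Y$ become Cartier. Multiplication by $g$ gives a map $\pi_*\omega_Y\to\cO_X$ whose restriction to $\pi_*\omega_Y(-E_Y)$ has image $\Tr(\pi_*\cO_Y(K_Y-\pi^*(K_X+\Delta)-E_Y))=\sJ((X,\Delta);I)=\sJ(X,I)$ by the projection formula and \cite[Theorem~8.1]{BST15}. This is recorded as Proposition \ref{cont1}.

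For the reverse direction, given an arbitrary $\pi:Y\to X$ and $\phi:\pi_*\omega_Y\to\cO_X$, I would Stein-factor $\pi=\rho\circ\tau$ with $\rho:Z\to X$ finite. Lemma \ref{cover} uses Grothendieck duality together with the $S_2$ property of $X$ to produce $h\in\Gamma(Z,\cO_Z(-\rho^*K_X))$ with $\phi=\Tr_{Z/X}(h\cdot{-})$, and the projection formula recasts $\image(\phi|_{\pi_*\omega_Y(-E_Y)})$ as $\Tr_{Z/X}(\rho_*\sJ(\omega_Z,\div h,I\cO_Z))$. Lemma \ref{fg} then yields the inclusion $\Tr_{Z/X}(\rho_*\sJ(\omega_Z,\Gamma,I\cO_Z))\subseteq\sJ(X,I)$ whenever $\Gamma\geq\rho^*K_X$ is $\bQ$-Cartier, completing Proposition \ref{cont2}. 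Assembling the two propositions then gives the theorem.

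The main obstacle is not the theorem itself, which is a formal splice of the two containments, but rather Lemma \ref{fg}, which carries the genuine content. I expect the hard part to be there: the Galois-averaging argument replacing $f$ by $h=\prod_{\sigma\in G}\sigma(f)$ to descend the divisor from $Z$ to $X$ at the cost of an $|G|$-th power, the check via Proposition \ref{pos} that passing from $f$ to $h$ does not change the multiplier submodule because the floor $\lfloor\tfrac{1}{n|G|}\div_Y h\rfloor$ agrees with $\lfloor\tfrac{1}{n}\div_Y f\rfloor$, and finally the invocation of Proposition \ref{max} and Remark \ref{rmk} to land inside $\sJ(X,I)$ by de Fernex--Hacon maximality.
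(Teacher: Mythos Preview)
Your proposal is correct and follows essentially the same route as the paper: the theorem is obtained by combining Proposition~\ref{cont1} (forward containment via Proposition~\ref{mult}, \cite[Lemma~4.5]{BST15}, and \cite[Theorem~8.1]{BST15}) with Proposition~\ref{cont2} (reverse containment via Stein factorization, Lemma~\ref{cover}, and Lemma~\ref{fg}), and you have accurately located the real work in Lemma~\ref{fg} with its Galois-averaging step, the floor comparison via Proposition~\ref{pos}, and the final appeal to Proposition~\ref{max} and Remark~\ref{rmk}. One minor phrasing point: in the forward direction the map $\pi_*\omega_Y\to\cO_X$ is $\Tr_{Y/X}(g\cdot{-})$, not bare multiplication by $g$, though your subsequent formula makes clear you intend exactly this.
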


As a corollary, we deduce the following derived splinter characterization of klt singularities:

\begin{cor}
Let $X$ be a normal, excellent, noetherian scheme over $\Spec\bQ$ with a dualizing complex and $I=\prod\sJ_k^{a_k}$ be a formal effective $\bQ$-linear combination of ideal sheaves on $X$. The following are equivalent
\begin{enumerate}
\item $(X,I)$ has klt type
\item For all sufficiently large regular alterations $\pi\colon Y\to X$, the natural map $\cO_X\to R\pi_*\cO_Y$ splits and locally factors through $R\pi_*\omega_Y(-E_Y)=\pi_*\omega_Y(-E_Y)$
\item For all sufficiently large regular alterations $\pi\colon Y\to X$, $\cO_X$ is locally a summand of $R\pi_*\omega_Y(-E_Y)=\pi_*\omega_Y(-E_Y)$
\end{enumerate}
Here, $E_Y:=s\sum a_kE_k$ where $\cO_Y(-E_Y)=\sJ_k\cO_Y$. By sufficiently large regular alteration we mean any regular alteration $\pi\colon Y\to X$ such that
\[\sJ(X)=\image(\sHom(\pi_*\omega_Y,\cO_X)\otimes_{\cO_X}\pi_*\omega_Y(-E_Y)\to\cO_X).\]
\end{cor}
\begin{proof}
By Theorem \ref{main}, (2)$\implies$(1)$\iff$(3). To see (1)$\iff$(3)$\implies$(2), let $\pi:Y\to X$ a regular log alteration of $(X,\Delta, I)$ and localize so that $\cO_X$ is a summand of $\pi_*\omega_Y$. Then because $X$ has klt type and thus rational singularities we can find maps $p_1,p_2,$ and $i$ such that the following composition is the identity:
\[\begin{tikzcd}
\cO_X\arrow[r]&R\pi_*\cO_Y\arrow[r,"p_1"]&\cO_X\arrow[r,"i"]&\pi_*\omega_Y(-E_Y)\arrow[r,"p_2"]&\cO_X.
\end{tikzcd}\]
\end{proof}

\section{Positive characteristic}

\subsection*{Preliminaries}

Let $R$ be a noetherian ring of characteristic $p>0$. We will denote by $F:R\to R$ the Frobenius morphism defined by $F(r)=r^p$ and denote by $F^e$ the $e$-fold composition of the Frobenius with itself. Given an $R$-module $M$, we denote by $F^e_*M$ the $R$-module where $R$ acts via restriction of scalars along the $e$-th iterate of the Frobenius. We say that $R$ is \emph{$F$-finite} if $F:R\to R$ is a finite map.

Because we do not have resolutions of singularities in characteristic $p$, we are forced to use different methods to study singularities. The Frobenius turns out to be a valuable tool, allowing us to relate singularities of rings and schemes to properties of their Frobenius endomorphism. In particular, we are able to define an analog of the multiplier ideal using such methods.

\begin{definition}
Let $R$ be an $F$-finite, reduced ring. The \emph{test ideal} $\tau(R)$ is the unique smallest ideal $J\subseteq R$, not contained in any minimal prime of $R$, such that $\phi(F^e_*J)\subseteq J$ for every $\phi\in\Hom(F^e_*R,R)$ and every $e>0$.
\end{definition}

This leads to a definition of strongly $F$-regular singularities, the positive characteristic analog of klt singularities.

\begin{definition}
    Let $R$ be an $F$-finite, reduced ring. Then $R$ is \emph{strongly $F$-regular} if $\tau(R)=R$.
\end{definition}

Similarly, we are able to define analogs of multiplier submodules. Suppose $R$ is an $F$-finite, locally equidimensional reduced ring with canonical module $\omega_R$. Consider the $e$-th iterate of the Frobenius $R\to F^e_*R$ and apply the functor $\Hom(-,\omega_R)$. This gives us a map
\[\Hom(F^e_*R,\omega_R)\to\omega_R\]
and using that $\Hom(F^e_*R,\omega_R)\cong F^e_*\omega_R$, we get a map $T^e:F^e_*\omega_R\to\omega_R$ that is dual to the Frobenius.

\begin{definition}[\cite{Smi95},\cite{Bli04},\cite{ST08}]
Let $R$ be an $F$-finite, locally equidimensional ring with canonical module $\omega_R$ and $T:F_*\omega_R\to\omega_R$ dual to the Frobenius. Then the \emph{parameter test submodule} $\tau(\omega_R)$ is the unique smallest submodule $M\subseteq\omega_R$, nonzero at any minimal prime of $R$, such that 
\[T(F_*M)\subseteq M\]
\end{definition}

Ultimately, our result in characteristic $p>2$ is a corollary to the following result of \cite{ES14} combined with the existence of quasi-Gorenstein finite covers.

\begin{prop}[\cite{ES14} Corollary 6.5]\label{test}
If $R$ is an $F$-finite reduced ring of characteristic $p>0$ and $R\subseteq S$ is a finite extension with $S$ reduced, then
\[\tau(R)=\sum_{e\geq0}\sum_\phi\phi(F^e_*\tau(S))\]
where $\phi$ ranges over all elements of $\Hom_R(F^e_*S,R)$.
\end{prop}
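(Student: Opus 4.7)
The plan is to prove both containments of $\tau(R) = J$, where $J := \sum_{e \geq 0} \sum_\phi \phi(F^e_* \tau(S))$. I expect $\tau(R) \subseteq J$ to be essentially formal via the defining minimality of the test ideal, while the reverse containment $J \subseteq \tau(R)$ will require a persistence argument for tight closure under the finite extension $R \to S$.

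For $\tau(R) \subseteq J$, I would verify that $J$ is an ideal satisfying the defining closure property of the test ideal, so that minimality yields the inclusion. The closure itself is immediate bookkeeping: for any $\psi \in \Hom_R(F^d_* R, R)$ and any $\phi \in \Hom_R(F^e_* S, R)$, the composition $\psi \circ F^d_* \phi$ lies in $\Hom_R(F^{d+e}_* S, R)$, so
\[
\psi\bigl(F^d_* \phi(F^e_* \tau(S))\bigr) = (\psi \circ F^d_* \phi)(F^{d+e}_* \tau(S))
\]
is again one of the summands defining $J$. The only remaining point is that $J$ is not contained in any minimal prime of $R$, which follows from the fact that $\tau(S)$ avoids every minimal prime of $S$, every minimal prime of $R$ is dominated by one of $S$ (since $R \to S$ is finite), and the $F$-finite hypothesis produces enough $R$-linear maps $F^e_* S \to R$ to witness non-vanishing modulo each minimal prime.

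For the reverse containment, I need $\phi(F^e_* \tau(S)) \subseteq \tau(R)$ for every $\phi$. The approach is via the tight-closure characterization $\tau(R) = \bigcap_M \operatorname{Ann}_R(0^*_M)$ as $M$ ranges over finitely generated $R$-modules. Given $z \in 0^*_M$ and $c \in \tau(S)$, the goal is $\phi(F^e_* c) \cdot z = 0$ in $M$. The strategy is to transfer $z$ to an appropriate $S$-module (an $S$-hull of $M$, or $M \otimes_R S$ modulo torsion) on which persistence of tight closure forces the image of $z$ to lie in $0^*$; since $c \in \tau(S)$ annihilates this image, applying $\phi$ back yields the desired annihilation downstairs.

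The hard part will be this persistence step. Tight closure does not behave transparently under finite non-flat extensions, and one must carefully construct the right $S$-module so that the image of $z$ genuinely lies in its tight closure of zero. The $F$-finite hypothesis is essential here: it provides completely stable test elements that persist under finite extensions, which is the technical engine driving the argument. Once persistence is in place, the use of maps in $\Hom_R(F^e_* S, R)$ (rather than $\Hom_S(F^e_* S, S)$) supplies exactly the mechanism to pull the $S$-side annihilation back to an annihilation of $R$-modules, closing the loop.
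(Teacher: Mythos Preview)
The paper does not prove this proposition: it is quoted verbatim from \cite{ES14} as Corollary~6.5 there and used as a black box in the proof of Proposition~\ref{testmain}. There is therefore no proof in the paper to compare your proposal against.

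That said, your outline is broadly the standard route to such a statement. The inclusion $\tau(R)\subseteq J$ via the minimality characterization is essentially correct as written; your bookkeeping identity $\psi\circ F^d_*\phi\in\Hom_R(F^{d+e}_*S,R)$ is exactly what is needed to show $J$ is compatibly split, and the non-triviality at minimal primes is a genuine but routine check using a test element for $S$ together with a trace-type map $S\to R$. Your sketch of the reverse inclusion is more speculative: the persistence argument you describe is delicate in the non-flat setting, and in practice the cleaner route (and the one closer to what Epstein--Schwede actually do) is to work directly with test elements rather than the module-theoretic description $\tau(R)=\bigcap_M\operatorname{Ann}_R(0^*_M)$. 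One fixes a completely stable test element $c\in R$ that is simultaneously a test element for $S$, and shows that every $\phi(F^e_*\tau(S))$ lands in $\tau(R)$ by unraveling the definition of $\tau(S)$ as images of $c$ under maps $F^{e'}_*S\to S$ and then composing down to $R$. Your proposal identifies the correct ingredients but leaves the hard step as an acknowledged gap; since the paper does not attempt to fill it either, you are not missing anything relative to the text.
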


We give an overview of a construction of quasi-Gorenstein finite covers in characteriztic not equal to 2 which we expect is well-known to experts. See, for example, the construction in the proof of \cite[Theorem~8.5]{Kawamata:1988}. We include the argument here for the convenience of the reader.

\begin{definition}
    We say that a commutative local ring $R$ is \emph{quasi-Gorenstein} if $R\simeq\omega_R$. More generally, we say a commutative Noetherian ring $R$ is quasi-Gorenstein if all of its local rings are quasi-Gorenstein.
\end{definition}

\begin{lemma}\label{qgor} Let $R$ be a normal commutative Noetherian ring of essentially finite type over a field $k$ of characteristic not equal to 2. Then $\Spec R$ has a quasi-Gorenstein finite cover.
\end{lemma}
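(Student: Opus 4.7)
The plan is to construct a finite surjective morphism $\rho\colon \Spec S \to \Spec R$ with $\omega_S$ free of rank one, equivalently with $K_S$ Cartier, by taking an explicit degree-$2$ cyclic cover whose covering datum is the anticanonical class. The hypothesis $\operatorname{char} k \neq 2$ ensures this cover is separable, so that the normalization of the constructed $R$-algebra is normal.

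Let $L := \omega_R^{\vee} = \Hom_R(\omega_R, R)$, a rank-one reflexive $R$-module representing the anticanonical class $-K_R$. Its reflexive tensor square $L^{[2]} := (L \otimes_R L)^{\vee\vee}$ is again a nonzero rank-one reflexive module, so one can pick a nonzero $s \in L^{[2]}$, chosen sufficiently generically so that $B := \div_R(s)$ is reduced and meets the singular locus of $\Spec R$ properly (such $s$ exist by a Bertini-type argument, using that $R$ is essentially of finite type over $k$).

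Next, form the reflexive $R$-algebra $S_0 := (R \oplus \omega_R)^{\vee\vee}$, with algebra structure defined via the multiplication map $\omega_R \otimes_R \omega_R \to \omega_R^{[2]} \xrightarrow{\,\cdot s^{-1}\,} R$, where $s^{-1}$ denotes the isomorphism $\omega_R^{[2]} \cong R$ on the open set where $s$ is invertible, extended reflexively. Let $S$ be the normalization of $S_0$. Since $\operatorname{char} k \neq 2$, the resulting $\rho\colon \Spec S \to \Spec R$ is a finite, separable, degree-$2$ cover with $S$ normal. A reflexive Riemann--Hurwitz computation then gives
\[K_S = \rho^{*}(K_R + L) = \rho^{*}(K_R - K_R) = 0,\]
so $\omega_S \cong \cO_{\Spec S}$ and $S$ is quasi-Gorenstein.

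The main obstacle is justifying the ramification formula $K_S = \rho^{*}(K_R + L)$ when $L$ is only reflexive (not locally free): this follows because the formula holds over the smooth locus of $\Spec R$ by the classical Hurwitz formula for double covers defined by line bundles, and both sides are rank-one reflexive sheaves on the normal scheme $\Spec S$, hence are determined by their restrictions in codimension one. The secondary points to verify are normality of $S$ (where $\operatorname{char} k \neq 2$ and the generic choice of $s$ are essential) and finiteness of $\rho$, both of which are standard for reflexive cyclic covers of normal bases.
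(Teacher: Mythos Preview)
Your proposal is correct and follows essentially the same construction as the paper: both build a degree-$2$ cover $S = R \oplus \omega_R$ with multiplication governed by a section of $\omega_R^{(-2)}$ cutting out a reduced divisor. The only notable difference is in the verification that $S$ is quasi-Gorenstein: the paper computes $\omega_S \cong \Hom_R(R\oplus\omega_R,\omega_R) \cong \omega_R \oplus R \cong S$ directly via duality for finite morphisms, whereas you invoke Riemann--Hurwitz to obtain $K_S \sim \rho^*(K_R - K_R) = 0$; both arguments are valid and amount to the same computation viewed from different angles. A few minor points: your reflexive hull $(R\oplus\omega_R)^{\vee\vee}$ and subsequent normalization are redundant, since $R\oplus\omega_R$ is already reflexive and, with $B$ reduced and $\operatorname{char} k \neq 2$, already normal by Serre's criterion; the notation ``$\cdot\, s^{-1}$'' for the map $\omega_R^{[2]}\to R$ induced by $s \in L^{[2]} \cong \Hom(\omega_R^{[2]},R)$ is potentially confusing; and your Bertini step is only sketched, whereas the paper treats it more carefully (blowing up $\omega_R^{(-2)}$ to reduce to a very ample system in positive characteristic, and passing to a finite field extension when $k$ is finite).
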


\begin{proof}
We first claim that we can find $H\sim -2K_R$ sufficiently general and hence reduced. In characteristic zero, this follows from Bertini's theorem for basepoint-free linear systems, here thinking of $|-2K_R|$ as a linear system on the regular locus of $\Spec R$. In characteristic $p>2$, let $X\to\Spec R$ be the normalized blowup of $\omega_R^{(-2)}$. If $k$ is infinite, then, because $p\neq2$ and $\omega_R^{(-2)}\cO_X\cong\cO_X(-G)$ is very ample, Bertini's theorem tells us that a general divisor $H'\sim-G$ will be normal, hence reduced. Pushing $H'$ down to $H$ on $\Spec R$, we get that $H$ is reduced and $H\sim-2K_R$, as this holds outside a codimension 2 subset of $\Spec R$ and $R$ is normal. If $k$ is not infinite, we can find such an $H$ on $R\otimes_k\overline{\bF_p}$ and thus on $R\otimes_k\bF_{p^e}$ for some $e$. Since we are only looking for a finite cover of $\Spec R$ at the end of the day, we can assume we have such an $H$ by working on $R\otimes_k\bF_{p^e}$.

Given such an $H$ with $H+2K_R=\div f$, let \[S=R\oplus\omega_R=R\oplus\omega_Rt\]
and define multiplication as $(a,b)(x,y)=(ax+byf,ay+bx)$. Since $R$ is regular outside of a set of codimension two, $\omega_R$ will be a line bundle outside of this set and $\omega_R^{(2)}\cong R(-H)$ (via multiplication by $f$) on the smooth locus. Because $R$ is $S_2$, this isomorphism will hold everywhere and so we get an $R$-algebra that is finite as an $R$-module. Furthermore, since $f$ is reduced, $S$ is normal.

We then claim that $S$ is quasi-Gorenstein. To see this, note that $\omega_S\simeq\Hom_R(S,\omega_R)$. However, since $S$ is normal and hence $S_2$, $\omega_S=\Hom_R(S,\omega_R)\simeq S$ and thus $S$ is quasi-Gorenstein.
\end{proof}

\begin{remark}\label{char}
    A priori, there is nothing preventing the existence of quasi-Gorenstein finite covers in characteristic 2. The characteristic restriction is simply an artefact of the proof as we can't guarantee that a general member of $|-2K_R|$ is reduced when $p=2$. 
\end{remark}

\subsection*{Results}

In this section, we present a description of the test ideal that is analagous to the description of the multiplier ideal in Theorem \ref{main}. Note that the restriction on the characteristic is an artefact of the fact that the proof uses Lemma \ref{qgor}

\begin{prop}\label{testmain}
Let $R$ be a Noetherian, $F$-finite reduced ring of characteristic $p>2$ and fix $L_1,\dots, L_n$, algebraic closures of $(R/\fp_i)_{\fp_i}$ for $\fp_i$ the minimal primes of $R$. The test ideal $\tau(R)$ can be realized as
\[\tau(R)=\sum_{R\subset S} \image\left(\Hom_R(\omega_S,R)\otimes_R\tau(\omega_S)\to R\right)\]
where the sum ranges over all finite extensions $R\subset S$ whose total ring of fraction is contained within $\prod L_i$ and $\Hom_R(\omega_S,R)\otimes_R\tau(\omega_S)\to R$ is the evaluation map.
\end{prop}

\begin{proof}
Consider $\phi:\tau(\omega_S)\to R$ and consider $S\to S'$ a quasi-Gorenstein finite cover. By \cite[Proposition~4.4]{BST15} we get that $\phi\circ\Tr_{S'/S}(\tau(\omega_{S'}))=\phi(\tau(\omega_S))$. Thus we can restrict ourselves to only considering quasi-Gorenstein covers in the sum.

Checking equality locally, consider a quasi-Gorenstein cover $S$ with an isomorphism $\omega_S\simeq S$ (and thus $\tau(\omega_S)\simeq\tau(S)$). $R\to F^e_*S$ is finite for all $e$ and so by Proposition \ref{test} and the above observation we have
\[\tau(R)=\sum_{e\geq0}\sum_\phi \phi(F^e_*\tau(S))=\sum_{e\geq0}\sum_\phi \phi(F^e_*\tau(\omega_S))=\sum_{R\to S'}\sum_\phi \phi(\tau(\omega_{S'}))\]
where the final sum again ranges over all $R\hookrightarrow S'$ finite and $\phi\in\Hom_R(\omega_{S'},R)$.

\end{proof}

As a corollary, we get the following characterization of strongly $F$-regular singularities.

\begin{cor}
Let $R$ be a Noetherian, $F$-finite reduced ring of characteristic $p>2$. Then $R$ is strongly $F$-regular if and only if $R$ is a summand of $\tau(\omega_S)$ for any finite extension $R\subset S$ such that $\tau(R)=\image(\Hom_R(\omega_S,R)\otimes_R\tau(\omega_S)\to R)$.
\end{cor}

\begin{proof}
$R$ is strongly $F$-regular if and only if $\tau(R)=R$. By the previous proposition, this implies there is some finite $R\to S$ and a surjection $\tau(\omega_S)\to R$ which must split as a map of $R$-modules. The reverse direction follows from Proposition \ref{testmain} by letting $\pi:\tau(\omega_S)\to R$ be the projection onto the summand, implying $\tau(R)=R$.
\end{proof}

\begin{remark}
    In the case where $R$ is a quasi-Gorenstein domain, the previous corollary follows immediately from \cite[Proposition~4.4]{BST15} which says that $\Tr(\tau(\omega_S))=\tau(\omega_R)\simeq\tau(R)$. See also \cite[Corollary~5.7]{Carvajal-Rojas-Stabler:2023} which uses Cartier modules to achieve the same result. In particular, these results do not assume $p>2$.
\end{remark}

\bibliographystyle{amsalpha}
\bibliography{main}

@article {Carvajal-Rojas-Stabler:2023,
    AUTHOR = {Carvajal-Rojas, Javier and St\"abler, Axel},
     TITLE = {On the behavior of {$F$}-signatures, splitting primes, and
              test modules under finite covers},
   JOURNAL = {J. Pure Appl. Algebra},
  FJOURNAL = {Journal of Pure and Applied Algebra},
    VOLUME = {227},
      YEAR = {2023},
    NUMBER = {1},
     PAGES = {Paper No. 107165, 38},
      ISSN = {0022-4049,1873-1376},
   MRCLASS = {13A35 (14B05)},
  MRNUMBER = {4441471},
MRREVIEWER = {Zhibek\ Kadyrsizova},
       DOI = {10.1016/j.jpaa.2022.107165},
       URL = {https://doi.org/10.1016/j.jpaa.2022.107165},
}

@article {Lipman:1994,
    AUTHOR = {Lipman, Joseph},
     TITLE = {Adjoints of ideals in regular local rings},
      NOTE = {With an appendix by Steven Dale Cutkosky},
   JOURNAL = {Math. Res. Lett.},
  FJOURNAL = {Mathematical Research Letters},
    VOLUME = {1},
      YEAR = {1994},
    NUMBER = {6},
     PAGES = {739--755},
      ISSN = {1073-2780},
   MRCLASS = {13H05},
  MRNUMBER = {1306018},
MRREVIEWER = {Ian\ M.\ Aberbach},
       DOI = {10.4310/MRL.1994.v1.n6.a10},
       URL = {https://doi.org/10.4310/MRL.1994.v1.n6.a10},
}

@article {Ein-Lazarsfeld-Smith:2001,
    AUTHOR = {Ein, Lawrence and Lazarsfeld, Robert and Smith, Karen E.},
     TITLE = {Uniform bounds and symbolic powers on smooth varieties},
   JOURNAL = {Invent. Math.},
  FJOURNAL = {Inventiones Mathematicae},
    VOLUME = {144},
      YEAR = {2001},
    NUMBER = {2},
     PAGES = {241--252},
      ISSN = {0020-9910,1432-1297},
   MRCLASS = {13A10 (13H05 14Q20)},
  MRNUMBER = {1826369},
MRREVIEWER = {Irena\ Swanson},
       DOI = {10.1007/s002220100121},
       URL = {https://doi.org/10.1007/s002220100121},
}

@incollection {Demailly-Ein-Lazarsfeld:2000,
    AUTHOR = {Demailly, Jean-Pierre and Ein, Lawrence and Lazarsfeld,
              Robert},
     TITLE = {A subadditivity property of multiplier ideals},
      NOTE = {Dedicated to William Fulton on the occasion of his 60th
              birthday},
   JOURNAL = {Michigan Math. J.},
  FJOURNAL = {Michigan Mathematical Journal},
    VOLUME = {48},
      YEAR = {2000},
     PAGES = {137--156},
      ISSN = {0026-2285,1945-2365},
   MRCLASS = {14E99 (14J17)},
  MRNUMBER = {1786484},
MRREVIEWER = {Karen\ E.\ Smith},
       DOI = {10.1307/mmj/1030132712},
       URL = {https://doi.org/10.1307/mmj/1030132712},
}

@article {Ein-Lazarsfeld:1999,
    AUTHOR = {Ein, Lawrence and Lazarsfeld, Robert},
     TITLE = {A geometric effective {N}ullstellensatz},
   JOURNAL = {Invent. Math.},
  FJOURNAL = {Inventiones Mathematicae},
    VOLUME = {137},
      YEAR = {1999},
    NUMBER = {2},
     PAGES = {427--448},
      ISSN = {0020-9910,1432-1297},
   MRCLASS = {14F17 (14C20)},
  MRNUMBER = {1705839},
MRREVIEWER = {Reinhold\ H\"{u}bl},
       DOI = {10.1007/s002220050332},
       URL = {https://doi.org/10.1007/s002220050332},
}

@article {Ein-Lazarsfeld:1997,
    AUTHOR = {Ein, Lawrence and Lazarsfeld, Robert},
     TITLE = {Singularities of theta divisors and the birational geometry of
              irregular varieties},
   JOURNAL = {J. Amer. Math. Soc.},
  FJOURNAL = {Journal of the American Mathematical Society},
    VOLUME = {10},
      YEAR = {1997},
    NUMBER = {1},
     PAGES = {243--258},
      ISSN = {0894-0347,1088-6834},
   MRCLASS = {14K05 (14E05)},
  MRNUMBER = {1396893},
MRREVIEWER = {J\'{a}nos\ Koll\'{a}r},
       DOI = {10.1090/S0894-0347-97-00223-3},
       URL = {https://doi.org/10.1090/S0894-0347-97-00223-3},
}

@article {Kawamata:1999a,
    AUTHOR = {Kawamata, Yujiro},
     TITLE = {Deformations of canonical singularities},
   JOURNAL = {J. Amer. Math. Soc.},
  FJOURNAL = {Journal of the American Mathematical Society},
    VOLUME = {12},
      YEAR = {1999},
    NUMBER = {1},
     PAGES = {85--92},
      ISSN = {0894-0347,1088-6834},
   MRCLASS = {14B07 (14F17)},
  MRNUMBER = {1631527},
MRREVIEWER = {Gerhard\ Pfister},
       DOI = {10.1090/S0894-0347-99-00285-4},
       URL = {https://doi.org/10.1090/S0894-0347-99-00285-4},
}

@incollection {Kawamata:1999b,
    AUTHOR = {Kawamata, Yujiro},
     TITLE = {On the extension problem of pluricanonical forms},
 BOOKTITLE = {Algebraic geometry: {H}irzebruch 70 ({W}arsaw, 1998)},
    SERIES = {Contemp. Math.},
    VOLUME = {241},
     PAGES = {193--207},
 PUBLISHER = {Amer. Math. Soc., Providence, RI},
      YEAR = {1999},
      ISBN = {0-8218-1149-5},
   MRCLASS = {14J10 (14E30)},
  MRNUMBER = {1718145},
MRREVIEWER = {S\'{a}ndor\ J.\ Kov\'{a}cs},
       DOI = {10.1090/conm/241/03636},
       URL = {https://doi.org/10.1090/conm/241/03636},
}

@article {Siu:1998,
    AUTHOR = {Siu, Yum-Tong},
     TITLE = {Invariance of plurigenera},
   JOURNAL = {Invent. Math.},
  FJOURNAL = {Inventiones Mathematicae},
    VOLUME = {134},
      YEAR = {1998},
    NUMBER = {3},
     PAGES = {661--673},
      ISSN = {0020-9910,1432-1297},
   MRCLASS = {32L10 (32G13 32J18)},
  MRNUMBER = {1660941},
MRREVIEWER = {J\'{a}nos\ Koll\'{a}r},
       DOI = {10.1007/s002220050276},
       URL = {https://doi.org/10.1007/s002220050276},
}

@article {Siu:1993,
    AUTHOR = {Siu, Yum Tong},
     TITLE = {An effective {M}atsusaka big theorem},
   JOURNAL = {Ann. Inst. Fourier (Grenoble)},
  FJOURNAL = {Universit\'{e} de Grenoble. Annales de l'Institut Fourier},
    VOLUME = {43},
      YEAR = {1993},
    NUMBER = {5},
     PAGES = {1387--1405},
      ISSN = {0373-0956,1777-5310},
   MRCLASS = {32J25 (14C20 14C30 14J60)},
  MRNUMBER = {1275204},
MRREVIEWER = {Shanyu\ Ji},
       URL = {http://www.numdam.org/item?id=AIF_1993__43_5_1387_0},
}

@article {Angehrn-Siu:1995,
    AUTHOR = {Angehrn, Urban and Siu, Yum Tong},
     TITLE = {Effective freeness and point separation for adjoint bundles},
   JOURNAL = {Invent. Math.},
  FJOURNAL = {Inventiones Mathematicae},
    VOLUME = {122},
      YEAR = {1995},
    NUMBER = {2},
     PAGES = {291--308},
      ISSN = {0020-9910,1432-1297},
   MRCLASS = {32J25 (14C20 32L10)},
  MRNUMBER = {1358978},
MRREVIEWER = {Jaros\l aw\ A.\ Wi\'{s}niewski},
       DOI = {10.1007/BF01231446},
       URL = {https://doi.org/10.1007/BF01231446},
}

@article {Demailly:1993,
    AUTHOR = {Demailly, Jean-Pierre},
     TITLE = {A numerical criterion for very ample line bundles},
   JOURNAL = {J. Differential Geom.},
  FJOURNAL = {Journal of Differential Geometry},
    VOLUME = {37},
      YEAR = {1993},
    NUMBER = {2},
     PAGES = {323--374},
      ISSN = {0022-040X,1945-743X},
   MRCLASS = {14C20 (14E25 32J25 32L10)},
  MRNUMBER = {1205448},
MRREVIEWER = {Jaros\l aw\ A.\ Wi\'{s}niewski},
       URL = {http://projecteuclid.org/euclid.jdg/1214453680},
}

@article {Kawamata:1988,
    AUTHOR = {Kawamata, Yujiro},
     TITLE = {Crepant blowing-up of {$3$}-dimensional canonical
              singularities and its application to degenerations of
              surfaces},
   JOURNAL = {Ann. of Math. (2)},
  FJOURNAL = {Annals of Mathematics. Second Series},
    VOLUME = {127},
      YEAR = {1988},
    NUMBER = {1},
     PAGES = {93--163},
      ISSN = {0003-486X,1939-8980},
   MRCLASS = {14E30 (14B05 14E35 14J10 14J30)},
  MRNUMBER = {924674},
MRREVIEWER = {I.\ Dolgachev},
       DOI = {10.2307/1971417},
       URL = {https://doi.org/10.2307/1971417},
}

@article {Nadel:1990,
    AUTHOR = {Nadel, Alan Michael},
     TITLE = {Multiplier ideal sheaves and {K}\"{a}hler-{E}instein metrics
              of positive scalar curvature},
   JOURNAL = {Ann. of Math. (2)},
  FJOURNAL = {Annals of Mathematics. Second Series},
    VOLUME = {132},
      YEAR = {1990},
    NUMBER = {3},
     PAGES = {549--596},
      ISSN = {0003-486X,1939-8980},
   MRCLASS = {32L07 (32F07 53C25 53C55)},
  MRNUMBER = {1078269},
MRREVIEWER = {Yuri\ Bozhkov},
       DOI = {10.2307/1971429},
       URL = {https://doi.org/10.2307/1971429},
}

@book {Esnault-Viehweg:1992,
    AUTHOR = {Esnault, H\'{e}l\`ene and Viehweg, Eckart},
     TITLE = {Lectures on vanishing theorems},
    SERIES = {DMV Seminar},
    VOLUME = {20},
 PUBLISHER = {Birkh\"{a}user Verlag, Basel},
      YEAR = {1992},
     PAGES = {vi+164},
      ISBN = {3-7643-2822-3},
   MRCLASS = {14F17 (14F40 32L10 32L20)},
  MRNUMBER = {1193913},
MRREVIEWER = {Marko\ Roczen},
       DOI = {10.1007/978-3-0348-8600-0},
       URL = {https://doi.org/10.1007/978-3-0348-8600-0},
}

@article {Grauert-Riemenschneider:1970,
    AUTHOR = {Grauert, Hans and Riemenschneider, Oswald},
     TITLE = {Verschwindungss\"{a}tze f\"{u}r analytische
              {K}ohomologiegruppen auf komplexen {R}\"{a}umen},
   JOURNAL = {Invent. Math.},
  FJOURNAL = {Inventiones Mathematicae},
    VOLUME = {11},
      YEAR = {1970},
     PAGES = {263--292},
      ISSN = {0020-9910,1432-1297},
   MRCLASS = {32C35 (32L10)},
  MRNUMBER = {302938},
MRREVIEWER = {H.\ B.\ Laufer},
       DOI = {10.1007/BF01403182},
       URL = {https://doi.org/10.1007/BF01403182},
}

@incollection {Temkin:2023,
    AUTHOR = {Temkin, Michael},
     TITLE = {Relative and logarithmic resolution of singularities},
 BOOKTITLE = {New techniques in resolution of singularities},
    SERIES = {Oberwolfach Semin.},
    VOLUME = {50},
     PAGES = {139--218},
 PUBLISHER = {Birkh\"auser/Springer, Cham},
      YEAR = {[2023] \copyright 2023},
      ISBN = {978-3-031-32114-6; 978-3-031-32115-3},
   MRCLASS = {14A21 (14B05 14E15)},
  MRNUMBER = {4698334},
}

@article {Temkin:2008,
    AUTHOR = {Temkin, Michael},
     TITLE = {Desingularization of quasi-excellent schemes in characteristic
              zero},
   JOURNAL = {Adv. Math.},
  FJOURNAL = {Advances in Mathematics},
    VOLUME = {219},
      YEAR = {2008},
    NUMBER = {2},
     PAGES = {488--522},
      ISSN = {0001-8708,1090-2082},
   MRCLASS = {14E15},
  MRNUMBER = {2435647},
MRREVIEWER = {Ana\ Bravo},
       DOI = {10.1016/j.aim.2008.05.006},
       URL = {https://doi.org/10.1016/j.aim.2008.05.006},
}

@article{Sch11,
	author = {Schwede, Karl},
	date-added = {2023-03-30 16:47:27 -0600},
	date-modified = {2023-03-30 16:47:38 -0600},
	doi = {10.1090/S0002-9947-2011-05297-9},
	fjournal = {Transactions of the American Mathematical Society},
	issn = {0002-9947},
	journal = {Trans. Amer. Math. Soc.},
	mrclass = {13A35 (14B05 14F18)},
	mrnumber = {2817415},
	mrreviewer = {D.-M. Popescu},
	number = {11},
	pages = {5925--5941},
	title = {Test ideals in non-{$\Bbb{Q}$}-{G}orenstein rings},
	url = {https://doi.org/10.1090/S0002-9947-2011-05297-9},
	volume = {363},
	year = {2011},
	bdsk-url-1 = {https://mathscinet.ams.org/mathscinet-getitem?mr=2817415}}

@article{LM22,
	abstract = {We establish the relative minimal model program with scaling for projective morphisms of quasi-excellent algebraic spaces admitting dualizing complexes, formal schemes admitting dualizing complexes, semianalytic germs of complex analytic spaces, rigid analytic spaces, and Berkovich spaces, all in equal characteristic zero. To do so, we prove finite generation of relative adjoint rings associated to projective morphisms of such spaces using the strategy of Cascini and Lazi{\'c} and the generalization of the Kawamata-Viehweg vanishing theorem to the scheme setting recently established by the second author. To prove these results uniformly, we prove GAGA theorems for Grothendieck duality and dualizing complexes to reduce to the algebraic case.},
	author = {Shiji Lyu and Takumi Murayama},
	date-added = {2023-03-29 15:35:42 -0600},
	date-modified = {2023-03-29 15:35:55 -0600},
	eprint = {2209.08732},
	month = {09},
	title = {The relative minimal model program for excellent algebraic spaces and analytic spaces in equal characteristic zero},
	url = {https://arxiv.org/pdf/2209.08732.pdf},
	year = {2022},
	bdsk-url-1 = {https://arxiv.org/pdf/2209.08732.pdf},
	bdsk-url-2 = {https://arxiv.org/abs/2209.08732}}

@incollection{ST08,
	author = {Schwede, Karl and Takagi, Shunsuke},
	date-added = {2023-03-29 15:21:27 -0600},
	date-modified = {2023-03-29 15:24:18 -0600},
	doi = {10.1307/mmj/1220879429},
	fjournal = {Michigan Mathematical Journal},
	issn = {0026-2285},
	journal = {Michigan Math. J.},
	mrclass = {14B05 (14E30)},
	mrnumber = {2492473},
	mrreviewer = {Marko Roczen},
	note = {Special volume in honor of Melvin Hochster},
	pages = {625--658},
	title = {Rational singularities associated to pairs},
	url = {https://doi.org/10.1307/mmj/1220879429},
	volume = {57},
	year = {2008},
	bdsk-url-1 = {https://mathscinet.ams.org/mathscinet-getitem?mr=2492473}}

@article{Smi95,
	author = {Smith, Karen E.},
	date-added = {2023-03-29 15:21:10 -0600},
	date-modified = {2023-03-29 15:24:48 -0600},
	doi = {10.2307/2155019},
	fjournal = {Transactions of the American Mathematical Society},
	issn = {0002-9947},
	journal = {Trans. Amer. Math. Soc.},
	mrclass = {13A35 (13H10)},
	mrnumber = {1311917},
	mrreviewer = {Ian M. Aberbach},
	number = {9},
	pages = {3453--3472},
	title = {Test ideals in local rings},
	url = {https://doi.org/10.2307/2155019},
	volume = {347},
	year = {1995},
	bdsk-url-1 = {https://mathscinet.ams.org/mathscinet-getitem?mr=1311917}}

@article {Mur21,
    AUTHOR = {Murayama, Takumi},
     TITLE = {Relative vanishing theorems for {$\bf Q$}-schemes},
   JOURNAL = {Algebr. Geom.},
  FJOURNAL = {Algebraic Geometry},
    VOLUME = {12},
      YEAR = {2025},
    NUMBER = {1},
     PAGES = {84--144},
      ISSN = {2313-1691,2214-2584},
   MRCLASS = {14F17 (13F40 14A15 14B05 14B15 14E15)},
  MRNUMBER = {4841227},
MRREVIEWER = {Kelly\ Jabbusch},
}

@book{KM98,
	author = {Koll\'{a}r, J\'{a}nos and Mori, Shigefumi},
	date-added = {2023-03-29 15:19:40 -0600},
	date-modified = {2023-03-29 15:23:53 -0600},
	doi = {10.1017/CBO9780511662560},
	isbn = {0-521-63277-3},
	mrclass = {14E30},
	mrnumber = {1658959},
	mrreviewer = {Mark Gross},
	note = {With the collaboration of C. H. Clemens and A. Corti, Translated from the 1998 Japanese original},
	pages = {viii+254},
	publisher = {Cambridge University Press, Cambridge},
	series = {Cambridge Tracts in Mathematics},
	title = {Birational geometry of algebraic varieties},
	url = {https://doi.org/10.1017/CBO9780511662560},
	volume = {134},
	year = {1998},
	bdsk-url-1 = {https://mathscinet.ams.org/mathscinet-getitem?mr=1658959}}

@book{Har77,
	author = {Hartshorne, Robin},
	date-added = {2023-03-29 15:19:21 -0600},
	date-modified = {2023-03-29 15:23:48 -0600},
	isbn = {0-387-90244-9},
	mrclass = {14-01},
	mrnumber = {0463157},
	mrreviewer = {Robert Speiser},
	pages = {xvi+496},
	publisher = {Springer-Verlag, New York-Heidelberg},
	series = {Graduate Texts in Mathematics, No. 52},
	title = {Algebraic geometry},
	url = {https://mathscinet.ams.org/mathscinet-getitem?mr=0463157},
	year = {1977},
	bdsk-url-1 = {https://mathscinet.ams.org/mathscinet-getitem?mr=0463157}}

@article{FG12,
	author = {Fujino, Osamu and Gongyo, Yoshinori},
	date-added = {2023-03-29 15:18:32 -0600},
	date-modified = {2023-03-29 15:24:10 -0600},
	doi = {10.1307/mmj/1339011526},
	fjournal = {Michigan Mathematical Journal},
	issn = {0026-2285},
	journal = {Michigan Math. J.},
	mrclass = {14C20 (14B05)},
	mrnumber = {2944479},
	mrreviewer = {C\'{\i}cero Carvalho},
	number = {2},
	pages = {255--264},
	title = {On canonical bundle formulas and subadjunctions},
	url = {https://doi.org/10.1307/mmj/1339011526},
	volume = {61},
	year = {2012},
	bdsk-url-1 = {https://mathscinet.ams.org/mathscinet-getitem?mr=2944479}}

@article{Bli04,
	author = {Blickle, Manuel},
	date-added = {2023-03-29 15:18:08 -0600},
	date-modified = {2023-03-29 15:24:04 -0600},
	doi = {10.1007/s00209-004-0655-y},
	fjournal = {Mathematische Zeitschrift},
	issn = {0025-5874},
	journal = {Math. Z.},
	mrclass = {14M25 (13A35 14J17)},
	mrnumber = {2092724},
	mrreviewer = {Jungkai Alfred Chen},
	number = {1},
	pages = {113--121},
	title = {Multiplier ideals and modules on toric varieties},
	url = {https://doi.org/10.1007/s00209-004-0655-y},
	volume = {248},
	year = {2004},
	bdsk-url-1 = {https://mathscinet.ams.org/mathscinet-getitem?mr=2092724}}

@book{Laz04,
	author = {Lazarsfeld, Robert},
	date-added = {2023-03-22 16:42:41 -0600},
	date-modified = {2023-03-22 16:43:32 -0600},
	doi = {10.1007/978-3-642-18808-4},
	isbn = {3-540-22534-X},
	mrclass = {14-02 (14C20 14F05 14F17)},
	mrnumber = {2095472},
	mrreviewer = {Mihnea Popa},
	note = {Positivity for vector bundles, and multiplier ideals},
	pages = {xviii+385},
	publisher = {Springer-Verlag, Berlin},
	series = {Ergebnisse der Mathematik und ihrer Grenzgebiete. 3. Folge. A Series of Modern Surveys in Mathematics [Results in Mathematics and Related Areas. 3rd Series. A Series of Modern Surveys in Mathematics]},
	title = {Positivity in algebraic geometry. {II}},
	url = {https://doi.org/10.1007/978-3-642-18808-4},
	volume = {49},
	year = {2004},
	bdsk-url-1 = {https://mathscinet.ams.org/mathscinet-getitem?mr=2095472}}

@article{Kov00,
	author = {Kov\'{a}cs, S\'{a}ndor J.},
	date-added = {2023-03-22 14:48:34 -0600},
	date-modified = {2023-03-22 14:48:44 -0600},
	doi = {10.1215/S0012-7094-00-10221-9},
	fjournal = {Duke Mathematical Journal},
	issn = {0012-7094},
	journal = {Duke Math. J.},
	mrclass = {14B05 (14E15 14E30)},
	mrnumber = {1749436},
	mrreviewer = {Tohsuke Urabe},
	number = {2},
	pages = {187--191},
	title = {A characterization of rational singularities},
	url = {https://doi.org/10.1215/S0012-7094-00-10221-9},
	volume = {102},
	year = {2000},
	bdsk-url-1 = {https://mathscinet.ams.org/mathscinet-getitem?mr=1749436}}

@article{Bha12,
	author = {Bhatt, Bhargav},
	date-added = {2023-03-22 14:48:06 -0600},
	date-modified = {2023-03-22 14:48:50 -0600},
	doi = {10.1112/S0010437X12000309},
	fjournal = {Compositio Mathematica},
	issn = {0010-437X},
	journal = {Compos. Math.},
	mrclass = {14F05 (14F17 14G15)},
	mrnumber = {2999303},
	mrreviewer = {Scott R. Nollet},
	number = {6},
	pages = {1757--1786},
	title = {Derived splinters in positive characteristic},
	url = {https://doi.org/10.1112/S0010437X12000309},
	volume = {148},
	year = {2012},
	bdsk-url-1 = {https://mathscinet.ams.org/mathscinet-getitem?mr=2999303}}

@article{dFH09,
	author = {de Fernex, Tommaso and Hacon, Christopher D.},
	date-added = {2023-03-22 14:41:15 -0600},
	date-modified = {2023-03-29 15:24:42 -0600},
	doi = {10.1112/S0010437X09003996},
	fjournal = {Compositio Mathematica},
	issn = {0010-437X},
	journal = {Compos. Math.},
	mrclass = {14F18 (14B05 14E15 14J17)},
	mrnumber = {2501423},
	mrreviewer = {Sonia P\'{e}rez-D\'{\i}az},
	number = {2},
	pages = {393--414},
	shorthand = {dFH09},
	title = {Singularities on normal varieties},
	url = {https://doi.org/10.1112/S0010437X09003996},
	volume = {145},
	year = {2009},
	bdsk-url-1 = {https://mathscinet.ams.org/mathscinet-getitem?mr=2501423}}

@article{ES14,
	author = {Epstein, Neil and Schwede, Karl},
	date-added = {2023-03-22 14:40:46 -0600},
	date-modified = {2023-03-22 14:41:30 -0600},
	doi = {10.1215/00277630-2376749},
	fjournal = {Nagoya Mathematical Journal},
	issn = {0027-7630},
	journal = {Nagoya Math. J.},
	mrclass = {13A35 (13B22 13B40 14B05 14F18)},
	mrnumber = {3290685},
	mrreviewer = {Geoffrey D. Dietz},
	pages = {41--75},
	title = {A dual to tight closure theory},
	url = {https://doi.org/10.1215/00277630-2376749},
	volume = {213},
	year = {2014},
	bdsk-url-1 = {https://mathscinet.ams.org/mathscinet-getitem?mr=3290685}}

@article{BST15,
	author = {Blickle, Manuel and Schwede, Karl and Tucker, Kevin},
	date-added = {2023-03-22 14:40:00 -0600},
	date-modified = {2023-03-22 14:40:17 -0600},
	doi = {10.1353/ajm.2015.0000},
	fjournal = {American Journal of Mathematics},
	issn = {0002-9327},
	journal = {Amer. J. Math.},
	mrclass = {14F18 (13A35)},
	mrnumber = {3318087},
	mrreviewer = {Fei Ye},
	number = {1},
	pages = {61--109},
	title = {{$F$}-singularities via alterations},
	url = {https://doi.org/10.1353/ajm.2015.0000},
	volume = {137},
	year = {2015},
	bdsk-url-1 = {https://mathscinet.ams.org/mathscinet-getitem?mr=3318087}}

\end{document}